\newtheorem{theorem}{Theorem}
\newtheorem{corollary}{Corollary}
\newtheorem{lemma}{Lemma}
\begin{document}

\begin{center}
	\textbf{{\LARGE Non-alternating Hamiltonian Lie algebras in three variables}}
\end{center}

\begin{center}
	\textbf{A.~V.~Kondrateva}
\end{center}

\begin{center}
	\textit{National Research Lobachevsky State University of Nizhny Novgorod, Gagarin ave. 23, Nizhny Novgorod, 603950 Russia}
	
	E-mail: alisakondr@mail.ru
\end{center}

\begin{flushleft}
	{\small \textbf{Abstract}. Non-alternating Hamiltonian Lie algebras in three variables over a perfect field of characteristic 2 are considered. A classification of non-alternating Hamiltonian forms over an algebra of divided powers in three variables and of the corresponding simple Lie algebras is given. In particular, it is shown that the non-alternating Hamiltonian form may not be equivalent to its linear part. It is proved that the Lie algebras which correspond to the nonequivalent forms are not isomorphic.}
\end{flushleft}

\begin{center}
	{\small Keywords ands phrases: \textit{Non-alternating Hamiltonian Lie algebra, non-alternating Hamiltonian form, characteristic two}}
\end{center}

\section{Introduction}

Non-alternating Hamiltonian algebras over a field of characteristic 2 were first constructed in 1993 by Lin Lei \cite{N} as Lie algebras of polynomials in divided powers with the symmetric Poisson bracket $\{f, g\} = \sum\limits_i \partial_i f \partial_i g$. In the case when the heights of the variables are equal to 1, non-alternating Hamiltonian Lie algebras are isomorphic to the first series of simple Lie algebras built by I. Kaplansky \cite{Kap}. In \cite{bgl, L}, symmetric differential forms in divided powers are introduced and non-alternating Hamiltonian Lie algebras similar to Hamiltonian Lie superalgebras of characteristic zero with respect to Poisson brackets with constant coefficients are studied. 
The classification of alternating Hamiltonian forms over an algebra of truncated polynomial is obtained by M.~I.~Kuznetsov, S~.A.~Kirillov (\cite{KK}). The complete classification over a divided powers algebra is build by S.~M.~Skryabin (\cite{S, SAr}). In \cite{SS, Sk} the general theory of alternating Hamiltonian Lie algebras in divided powers is developed. The non-alternating case is considered in \cite{dep}.
In particular, the classification of non-alternating Hamiltonian forms with constant coefficients over the divided powers algebra is obtained. It is proved that any filtered deformation of a graded non-alternating Hamiltonian Lie algebra is determined by a non-alternating Hamiltonian form with polynomial coefficients. One of the main results of \cite{dep} is the proof of the equivalence of the non-alternating Hamiltonian form $\omega$ with polynomial coefficients to its initial form $\omega(0)$, provided that the canonical form of $\omega(0)$ contains $(dx_i)^{(2)}$ or $dx_idx_j + (dx_j)^{(2)}$ for some variable $x_i$ of the height greater than 1. The problem of the classification of non-alternating Hamiltonian forms for which this condition is not satisfied remains open. In this paper, a classification of non-alternating Hamiltonian forms over algebras of divided powers in three variables with polynomial coefficients is obtained, and it is shown that if the above condition for the heights of the variables is not fulfilled, the non-alternating Hamiltonian form $\omega$ may not be equivalent to $\omega(0)$. In addition, the dimensions of the corresponding simple non-alternating Hamiltonian Lie algebras are found and it is proved that the Lie algebras which correspond to the nonequivalent forms are not isomorphic. \\ [0.5 cm]

ACKNOWLEDGEMENTS. The investigation is funded by RFBR according to research project № 18-01-00900 and Ministry of Science and Higher education of Russian Federation (project 0729-2020-0055). 

\section{Preliminaries}

Here we recall some basic definitions and results from \cite{dep}. Let $K$ be a perfect field of characteristic 2, $E$ be a vector space over $K$ and $\mathscr{F}\colon E=E_{0}\supseteq E_{1}\supseteq \ldots \supseteq E_{r}\supset E_{r+1}=\{0\}$ be a flag of $E$. A basis $\{ x_{1}, \ldots, x_{n} \}$ of $E$ is called coordinated with the flag $\mathscr{F}$ if $\{ x_{1}, \ldots, x_{n} \}\cap E_{i}$ is a basis of $E_{i}$, $i=1, \ldots, r$.

We use the notations and the definitions of algebras $O(E)$, $O(\mathscr{F})$, $O(n, \overline{m})$,  $W(E)$, $W(\mathscr{F})$, 
$W(n, \overline{m})$ from \cite{KSh}. The maximal ideal of $O(\mathscr{F})$ is denoted by $\mathfrak{m}$. Let $\{ x_{1}, \ldots, x_{n} \}$ be a basis of $E$ coordinated with the flag $\mathscr{F}$, $m_{i}=\min\{j ~|~ x_{i}\notin E_{j}\}$ be the height of $x_{i}$. The algebra $O(\mathscr{F})$ is isomorphic to the algebra $O(n, \overline{m})$, $\overline{m}=(m_{1}, \ldots, m_{n})$. The flag $\mathscr{F}$ is restored by the $n$-tuple $\overline{m}$, $E_{j} = \langle x_{i} ~|~ m_{i}>j \rangle$, in particular $E_{1} = \langle x_{i} ~|~ m_{i}>1 \rangle$.

A flag $\mathscr{F}\colon E=E_{0}= E_{1}= \ldots = E_{m-1}\supset E_{m}=\{0\}$ is called almost trivial. If $m=1$, then $\mathscr{F}$ is trivial.

Let $S\Omega(\mathscr{F}) = \bigoplus_{i\geqslant 0} S\Omega^{i}(\mathscr{F})$ be the algebra of symmetric differential forms, which is an $O(\mathscr{F})$-algebra of the divided powers (see also \cite{bgl, KKCh}).

Recall the definition of the differential $d\colon S\Omega^{r}(\mathscr{F}) \rightarrow S\Omega^{r+1}(\mathscr{F})$.
\begin{multline*}
(d\omega)(D_{1},\ldots,D_{r+1})=\sum_{i=1}^{r+1}D_{i}(\omega(D_{1},\ldots, \widehat{D_{i}}, \ldots,D_{r+1}))+ \\
+\sum_{i<j}\omega([D_{i},D_{j}], D_{1},\ldots, \widehat{D_{i}},\ldots, \widehat{D_{j}}, \ldots,D_{r+1}),
\end{multline*}	\\ [-0.7 cm]
$$d\omega^{(r)}= \omega^{(r-1)}d\omega, \quad df(D)= D(f), ~~f\in O(\mathscr{F}).$$

An isomorphism $\sigma\colon O(\mathscr{F})\rightarrow O(\mathscr{F}')$ is called admissible if for $r\geqslant 0$, $f \in \mathfrak{m}$ the inclusions $f^{(r)}\in O(\mathscr{F})$, $(\sigma f)^{(r)}\in O(\mathscr{F}')$ are both either fulfilled or not and if the inclusions are both fulfilled, then $(\sigma f)^{(r)}=\sigma (f^{(r)})$. This is equivalent to the fact that the mapping $D\mapsto \sigma\circ D\circ \sigma^{-1}$ defines an isomorphism $W(\mathscr{F})\rightarrow W(\mathscr{F}')$. An admissible isomorphism $\sigma$ extends to the isomorphism $S\Omega(\mathscr{F})\rightarrow S\Omega(\mathscr{F}')$ by the rule
$$(\sigma\omega)(D_{1},\ldots, D_{k})= \sigma(\omega(\sigma^{-1}D_{1}\sigma, \ldots, \sigma^{-1}D_{k}\sigma)).$$
Moreover, $\sigma d\omega = d\sigma\omega$, $\sigma(\omega_{1}\omega_{2}) = \sigma\omega_{1}\cdot \sigma\omega_{2}$ and $\sigma(\omega^{(k)})= (\sigma\omega)^{(k)}$.

Two differential forms $\omega$ and $\omega'$ will be called \textit{equivalent} if there is an admissible isomorphism $\sigma$ such that $\sigma\omega = \omega'$.
A symmetric $2$-form $\omega\in S\Omega^{2}(\mathscr{F})$ is called \textit{non-alternating} if there exists $D\in W(\mathscr{F})$ such that $\omega(D,D)\neq 0$.
Let $\{x_i\}$ be a basis of $E$  coordinated with the flag $\mathscr{F}$ and $\omega = \sum \omega_{ii}(dx_i)^{(2)} + \sum\limits_{i<j} \omega_{ij}dx_idx_j$. Set $\omega_{ji}= \omega_{ij}$ for $i<j$. A form $\omega$ is called \textit{nondegenerate} if $\det (\omega_{ij})$ is invertible in $O(\mathscr{F})$. 

A closed nondegenerate non-alternating form $\omega\in S\Omega^{2}(\mathscr{F})$ is called a \textit{non-alternating Hamiltonian} one. The corresponding Lie algebra of Hamiltonian vector fields is denoted by $\widetilde{P}(\mathscr{F}, \omega)$,
$$\widetilde{P}(\mathscr{F}, \omega)= \{ D\in W(\mathscr{F}) ~|~ D\omega=0 \}.$$

Moreover,
$D\in \widetilde{P}(\mathscr{F}, \omega)$ is equal to $D_{f}=\sum\limits_{i,j=1}^{n}\overline{\omega}_{ij} \partial_{j}f\partial_{i}$ for some $f \in \widetilde{O}(\mathscr{F}) = O(\mathscr{F}) + \langle x_{1}^{(2^{m_{1}})}, x_{2}^{(2^{m_{2}})},\ldots x_{n}^{(2^{m_{n}})} \rangle$.
Here $(\overline{\omega}_{ij}) = (\omega_{ij})^{-1}$ (see \cite{dep}).

The mapping $f\mapsto D_{f}$ is an isomorphism of the Lie algebra $\widetilde{P}(\mathscr{F}, \omega)$ and the Lie algebra $\widetilde{O}(\mathscr{F})\big/ K$ with the Poisson bracket
\begin{eqnarray}\label{eqeq 0.7}
\{ f, g \} = D_{f}(g) = D_{g}(f)= \sum_{i,j=1}^{n}\overline{\omega}_{ij} \partial_{i}f\partial_{j}g. 
\end{eqnarray}

Sometimes we will identify $\widetilde{P}(\mathscr{F}, \omega)$ with the Lie algebra $(\widetilde{O}(\mathscr{F})\big/ K, ~\{~ ,~\})$.     
Let $$P(\mathscr{F}, \omega)=\{ D_{f} ~|~ f\in O(\mathscr{F})\big/ K \}.$$

It follows from (\ref{eqeq 0.7}) that $P(\mathscr{F}, \omega)$ is an ideal of $\widetilde{P}(\mathscr{F}, \omega)$ of codimension $n$. A Lie algebra $\mathscr{L}$, $P^{(1)}(\mathscr{F}, \omega)\subseteq \mathscr{L} \subseteq \widetilde{P}(\mathscr{F}, \omega)$ will be called a \textit{non-alternating Hamiltonian Lie algebra}. 

We will say, that $\mathscr{L}$ is associated with a pair $(\mathscr{F}, \omega)$. If the forms $\omega$, $\omega'$ are equivalent and $\sigma\colon O(\mathscr{F})\rightarrow O(\mathscr{F}')$ is an admissible isomorphism such that $\sigma(\omega)= \omega'$, then Lie algebras $P(\mathscr{F}, \omega)$, $P(\mathscr{F}', \omega')$ are isomorphic, $\sigma(P(\mathscr{F}, \omega))= P(\mathscr{F}', \omega')$, where $\sigma(D_{f})= \sigma\circ D_{f} \circ\sigma^{-1} =D_{\sigma(f)}$.

Put $W=W(\mathscr{F})$, $V=W\big/ \mathfrak{m}W$. The form $\omega(0)$ is a nondegenerate symmetric bilinear form on $V$: for $D_{1}, D_{2}\in W$ ~$\omega(0)(\overline{D_{1}}, \overline{D_{2}}) = \omega(D_{1}, D_{2})(0) = \overline{\omega(D_{1}, D_{2})}\in O(\mathscr{F})\big/ \mathfrak{m}\cong K$. Employing the natural pairing $V$ and $E\cong \mathfrak{m}\big/ \mathfrak{m}^{(2)}$, $\langle \overline{D}, \overline{f} \rangle = \overline{D(f)} \in O(\mathscr{F})\big/ \mathfrak{m}$, one obtains the isomorphism $\iota\colon V\rightarrow E\cong V^{\ast}$, $\iota(\overline{D}) = \omega(0)(\overline{D}, -)$. The dual form $\overline{\omega(0)}$ on $E$, $\overline{\omega(0)}(\iota(\overline{D_{1}}), \iota(\overline{D_{2}})) = \omega(0)(\overline{D_{1}}, \overline{D_{2}})$, has the matrix $(\omega_{ij}(0))^{-1}$ with respect to a basis $\{x_{1}, \ldots, x_{n}\}$ dual to a basis $\{\overline{\partial_{1}}, \ldots, \overline{\partial_{n}}\}$ of $V$, were $(\omega_{ij}(0))$ is the matrix of $\omega(0)$ with respect to a basis $\{\overline{\partial_{1}}, \ldots, \overline{\partial_{n}}\}$. The space of null-vectors of $E$ with respect to the form $\overline{\omega(0)}$ is denoted by $E^{0}$. Let $\{x_{1}, \ldots, x_{n}\}$ be a basis of $E$ coordinated with the flag $\mathscr{F}$. Since $W= \langle \overline{\partial_{1}}, \ldots, \overline{\partial_{n}} \rangle \oplus \mathfrak{m}W$, we identify $V$ with $\langle \partial_{1}, \ldots, \partial_{n} \rangle$.

Later on, we will consider the dual flag of $V$, $0=V_{0}\subseteq V_{1}\subseteq \ldots \subset V_r=V$, where $V_i= \text{Ann}\,E_i$. To avoid the abuse of notations we will denote it by $\mathscr{F}$ as well.

The set $\{ i ~|~ \overline{\omega}_{ii}\neq 0 \}$ is denoted by $I$. E.g., if $\omega= dx_{1}dx_{2}+ (dx_{3})^{(2)}$, then $I= \{3\}$, if $\omega= dx_{1}dx_{2}+ (dx_{2})^{(2)}+ (dx_{3})^{(2)}$, then $I= \{1,3\}$. Note that  $\overline{\omega}_{ii}\in K$. Indeed, since $0=[D_{x_i}, D_{x_i}]=D_{\{x_i,x_i\}}$ and $D_f=0$ if and only if $f\in K$, we have $\overline{\omega}_{ii}=\{x_i,x_i\}\in K$.

We will use the following theorems from \cite{dep}. Here $G'$ is the subgroup of admissible automorphisms $\varphi$ of $O(\mathscr{F})$ such that $\varphi(x) \equiv x$ (mod $\mathfrak{m}^2$), $x\in E$, $dz_{r} =x_{r}^{(2^{m_{r}}-1)}dx_{r}$, $\widetilde{\mathfrak{m}}^{(j)}S\Omega^{1} = \langle T \rangle$, were $T= \{x^{(\alpha)}dx_{k}, ~|\alpha|\geqslant j, ~k=1,\ldots,n\}\diagdown \{x_{r}^{(2^{l})}x_{s}^{(2^{t})}dx_{i}, x_{q}^{(2)}dx_{i}, ~i,q\in I\}$.

\begin{theorem}[\cite{dep}, Theorem~3.1] \label{thth 3.1}
	Let $\omega$ be a non-alternating Hamiltonian form, $\omega= \omega(0) + d\psi + \eta$, where $\eta\in \langle dz_{r}dz_{s} \rangle$, $r,s=1,\ldots,n$, $\psi\in \mathfrak{m}^{(j+1)}S\Omega^{1}$, $j\geqslant 1$. There exists an automorphism $\sigma \in G'$ such that $\sigma\omega = \omega(0) + d\widetilde{\psi} + \widetilde{\eta}$, where $\widetilde{\psi} \in \widetilde{\mathfrak{m}}^{(j+1)}S\Omega^{1}$, $\widetilde{\eta}\in \langle dz_{r}dz_{s} \rangle$, $r,s=1,\ldots,n$.
\end{theorem}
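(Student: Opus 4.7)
The plan is to proceed by induction on the filtration degree, successively eliminating the "forbidden" monomials — those of the form $x_r^{(2^l)}x_s^{(2^t)}dx_i$ and $x_q^{(2)}dx_i$ with $i,q\in I$ — appearing in $\psi$ at each degree. Decompose $\psi = \psi_0 + \chi$, where $\psi_0$ already lies in $\widetilde{\mathfrak{m}}^{(j+1)}S\Omega^1$ and $\chi$ collects the forbidden monomials of lowest degree, say $j+1$. At each step we construct an admissible automorphism $\sigma\in G'$ whose effect on $\omega$ cancels $d\chi$ modulo a contribution in $\langle dz_r dz_s\rangle$ (to be absorbed into $\widetilde{\eta}$) plus a residue of strictly higher filtration degree (handled at the next iteration).

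The core of the argument is a Cartan-type formula for admissible automorphisms of $G'$ close to the identity. Given $D\in \mathfrak{m}^{(j)}W$, there is an associated $\sigma\in G'$ which is the identity modulo $\mathfrak{m}^{(j+1)}$ and whose action on a closed form satisfies
$$\sigma\omega \equiv \omega + d(\iota_D\omega) \pmod{\text{higher filtration}}.$$
Modulo one more degree, $\iota_D\omega \equiv \iota_D\omega(0)$, which is a $1$-form depending linearly on $D$. Nondegeneracy of $\omega(0)$ — whose matrix $(\omega_{ij}(0))$ is invertible over $K$ — allows us to solve for $D = \sum f_i\partial_i$ with coefficients $f_i\in\mathfrak{m}^{(j+1)}$ such that $\iota_D\omega(0)\equiv -\chi$ modulo admissible terms, thereby killing $\chi$ at the current degree.

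The main obstacle is the careful bookkeeping of the residual corrections produced by $\sigma$. Two types appear. First, strictly higher-order contributions from $\iota_D(\omega - \omega(0))$ and from the higher-order part of the automorphism; these lie in $\mathfrak{m}^{(j+2)}S\Omega^2$ and are disposed of at the next step of the induction. Second, contributions that unavoidably land in $\langle dz_r dz_s\rangle$: these arise because in characteristic $2$ the differential of an excluded monomial $x_r^{(2^l)}x_s^{(2^t)}dx_i$ or $x_q^{(2)}dx_i$ produces, via the Leibniz rule for divided powers, factors involving the top divided powers $x_r^{(2^{m_r}-1)}dx_r = dz_r$. The exclusion set $T$ in the definition of $\widetilde{\mathfrak{m}}^{(j+1)}S\Omega^1$ is engineered precisely so that these unavoidable residues fall into $\langle dz_r dz_s\rangle$ and can be absorbed into the updated $\widetilde{\eta}$. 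Since each iteration strictly increases the minimum degree of the remaining forbidden part, and the divided-power ring is finite-dimensional in each variable, the procedure terminates after finitely many steps and produces the required $\sigma$, $\widetilde{\psi}$, and $\widetilde{\eta}$.
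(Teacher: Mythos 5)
First, a point of reference: this paper does not prove Theorem~\ref{thth 3.1} at all --- it is imported verbatim from \cite{dep} (Theorem~3.1 there), so there is no in-text argument to compare yours against. That said, your outline does follow the strategy one would expect from \cite{dep} and from the earlier alternating-case classifications of Kuznetsov--Kirillov and Skryabin: induct on filtration degree, perturb by an admissible automorphism close to the identity constructed via the nondegeneracy of $\omega(0)$, and sort the resulting corrections into higher-degree terms and terms in $\langle dz_rdz_s\rangle$.

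As a proof, however, there is a genuine gap, plus one misidentified mechanism. The gap: the entire content of the theorem is that the monomials excluded from $T$, namely $x_r^{(2^l)}x_s^{(2^t)}dx_i$ and $x_q^{(2)}dx_i$ with $i,q\in I$, are exactly those whose elimination costs a residue in $\langle dz_rdz_s\rangle$ (or changes nothing, or trades one excluded monomial for another), while every other monomial of $\psi$ can be removed cleanly. Saying the exclusion set ``is engineered precisely so that'' the residues land in $\langle dz_rdz_s\rangle$ restates the conclusion instead of proving it; the case-by-case computation over the excluded monomials is the proof, and it is absent. The mechanism: to kill $d(g\,dx_i)$ one perturbs $x_k\mapsto x_k+\overline{\omega}_{ki}(0)\,g$, and for each $k$ with $\omega_{kk}(0)\neq 0$ the term $(dx_k)^{(2)}$ of $\omega(0)$ changes by $d(g_k\,dx_k)+(dg_k)^{(2)}$, $g_k=\overline{\omega}_{ki}(0)g$. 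It is the divided-power square $(dg_k)^{(2)}$ --- invisible to the linearization $\sigma\omega\approx\omega+d\iota_D\omega$ on which your argument leans --- that produces, for $g=x_r^{(2^l)}x_s^{(2^t)}$, the cross term $x_r^{(2^{l+1}-1)}x_s^{(2^{t+1}-1)}dx_rdx_s$; this is a multiple of $dz_rdz_s$ exactly at the top exponents $l=m_r-1$, $t=m_s-1$ and is exact (hence removable at the next step) otherwise. Your text instead attributes the $dz_rdz_s$ residue to ``the differential of an excluded monomial'' via the Leibniz rule, which is not where it comes from: $d$ of such a monomial never involves the top divided powers $x_r^{(2^{m_r}-1)}$, and is in any case accounted for by the linear term. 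Finally, a bookkeeping slip: to perturb the coefficients of $\omega$ in degree $j$ you need $D\in\mathfrak{m}^{(j+1)}W$, not $\mathfrak{m}^{(j)}W$; it is $j\geqslant 1$ that then guarantees $\sigma\in G'$.
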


\begin{theorem}[\cite{dep}, Theorem~3.2] \label{thth 3.2}
	Let $\omega$ be a non-alternating Hamiltonian form, such that
	$$\omega = \omega(0) + d\varphi + \sum\limits_{i<j} b_{ij}dz_{i}dz_{j},$$ 
	where $\varphi \in \widetilde{\mathfrak{m}}^{(2)}S\Omega^{1}$ and $b_{ij}\in K$. Then 
	
	(1) if there is $i \in I$ such that $m_{i}>1$ or $m_{i}=1$ for any $i \in I$ and $b_{sj}=0$, $s \in I$, then $\omega$ is conjugated with respect to $G'$ to the form $\omega(0)$.
	
	(2) $\omega$ is conjugated with respect to $G'$ to the form $\omega(0) + \sum\limits_{i \in I}\sum\limits_{j \notin I} b_{ij}dz_{i}dz_{j} + \sum\limits_{i<j \in I} b_{ij}dz_{i}dz_{j}$.
	
	Here, we set $b_{ji}= b_{ij}$ in the case $i<j$.
\end{theorem}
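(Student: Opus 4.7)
The plan is an induction on the filtration index of $\varphi$: if $\varphi \in \widetilde{\mathfrak{m}}^{(j+1)}S\Omega^1$ with lowest homogeneous component $\varphi_{j+1}$ of $x$-degree $j+1$, I would construct $\sigma \in G'$ of the form $x_k \mapsto x_k + f_k$ with $f_k \in \mathfrak{m}^{(j+1)}$ whose leading action on $\omega(0) = \sum \omega_{ij}(0)\,dx_i\,dx_j$ cancels $d\varphi_{j+1}$ modulo terms of filtration $\geqslant j+2$ and corrections in $\langle dz_rdz_s\rangle$. Writing this condition out gives a linear system for the monomial coefficients of the $f_k$; solvability follows from the nondegeneracy of $\omega(0)$, whose obstructions are exactly what ultimately determines the persistent $b_{ij}dz_idz_j$ terms. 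Once $\sigma$ is applied, Theorem~\ref{thth 3.1} reabsorbs any newly produced monomials outside $\widetilde{\mathfrak{m}}^{(j+2)}S\Omega^1 + \langle dz_rdz_s\rangle$, so that $\omega$ is back in the canonical shape but with $\varphi$ of strictly higher filtration. Since the filtration is bounded above by $\sum_i(2^{m_i}-1)$, the iteration terminates in finitely many steps with $\varphi = 0$.

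For part (2), the remaining task is to identify the cokernel of the linear system above. A case analysis against the excluded monomial set $\{x_r^{(2^l)}x_s^{(2^t)}dx_i,\,x_q^{(2)}dx_i : i,q\in I\}$ should reveal that this cokernel is spanned by the $dz_rdz_s$ with at least one of $r,s\in I$. Consequently, during the induction the $b_{ij}dz_idz_j$ with $i,j\notin I$ can be absorbed into higher-order $d\varphi'$, while those with an index in $I$ persist, producing the stated normal form.

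Part (1) requires an additional elimination step for the surviving $dz_idz_j$ terms. When $m_i>1$ for some $i\in I$, the 1-form $dz_i = x_i^{(2^{m_i}-1)}dx_i$ has a coefficient of $x$-degree $\geqslant 3$, and a further coordinate change of the shape $x_s \mapsto x_s + \alpha x_i^{(2^{m_i}-1)}x_j^{(\beta)}$ (with $s,\alpha,\beta$ suitable) produces a $dz_idz_j$ contribution in $\sigma\omega(0)$ that kills the corresponding $b_{ij}$; iterating these moves eliminates all of them and reduces $\omega$ to $\omega(0)$. When $m_i=1$ for all $i\in I$ and $b_{sj}=0$ for $s\in I$, the original $b$-sum already contains no term with index in $I$, and a refinement of the induction (verifying that under this hypothesis the cleanup of $d\varphi$ produces no new obstructions with an index in $I$) again gives $\omega(0)$. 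The principal technical obstacle throughout will be the precise description of the cokernel of the linear system in the inductive step and the verification that the coordinate changes used in part~(1) genuinely lie in $G'$ and do not create excluded monomials beyond the reach of Theorem~\ref{thth 3.1}.
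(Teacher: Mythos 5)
This statement is imported verbatim from \cite{dep} (Theorem~3.2 there); the present paper states it without proof, so there is no internal argument to compare your attempt against, and I can only judge the proposal on its own terms.

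Your strategy is the expected one — induct on the filtration of $\varphi$, cancel the lowest homogeneous part of $d\varphi$ by a substitution $x_k\mapsto x_k+f_k$ acting on $\omega(0)$, clean up with Theorem~\ref{thth 3.1}, and read off the surviving $dz_rdz_s$ as obstructions — but the proposal defers exactly the computations that constitute the theorem, and one of its central claims is in tension with the statement it is meant to prove. You write that ``solvability follows from the nondegeneracy of $\omega(0)$''; if the linear system were genuinely solvable there would be no surviving terms and no reason to exclude the monomials $x_q^{(2)}dx_i$, $x_r^{(2^l)}x_s^{(2^t)}dx_i$ ($i,q\in I$) from $\widetilde{\mathfrak{m}}^{(j)}S\Omega^{1}$. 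The point is that nondegeneracy of $(\omega_{kl}(0))$ determines the $f_k$ only up to two characteristic-$2$, divided-power phenomena: the substitution contributes a Frobenius-type term $\omega_{kk}(0)(df_k)^{(2)}$ (this is where the set $I$, defined through the \emph{inverse} matrix by $\overline{\omega}_{ii}\neq 0$, actually enters — your sketch never engages with that definition), and the would-be primitive of $dz_r$ is $x_r^{(2^{m_r})}$, which lies outside $O(\mathscr{F})$. You also do not verify that the maps $x_k\mapsto x_k+f_k$ are admissible, which restricts the allowed $f_k$ in a way depending on $\overline{m}$. Finally, for part (1) the decisive step — that a substitution of the shape $x_s\mapsto x_s+\alpha x_i^{(2^{m_i}-1)}x_j^{(\beta)}$ is admissible, lies in $G'$, and produces $dz_idz_j$ with controllable error precisely when some $i\in I$ has $m_i>1$ — is asserted rather than carried out; that computation, together with the identification of the obstruction space, is the substance of the result, so what you have is a plausible plan rather than a proof.
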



The theorem below is basically the weak form of \cite[Th.~5.2]{dep}. Here we use the concept of the minimal flag of a transitive Lie algebra from \cite{K}.

\begin{theorem} \label{thth 5.2}
	(i) Let $\mathscr{L}$ be a non-alternating Hamiltonian Lie algebra associated with a pair $(\mathscr{F}, \omega)$, $\mathscr{L}_{0}$ be the standard maximal subalgebra of $\mathscr{L}$. The minimal flag $\mathscr{F}(\mathscr{L}, \mathscr{L}_{0})$ coincides with $\mathscr{F}$.
	
	(ii) Let $\psi\colon (\mathscr{L}, \mathscr{L}_{0}) \rightarrow (\mathscr{L}', \mathscr{L}'_{0})$ be an isomorphism of non-alternating Hamiltonian transitive Lie algebras associated with pairs $(\mathscr{F}, \omega)$, $(\mathscr{F}', \omega')$, $\mathscr{L}_{0}$, $\mathscr{L}'_{0}$ be the standard maximal subalgebras of $\mathscr{L}$, $\mathscr{L}'$, respectively. Suppose that $n= \dim E>2$. Then $\psi$ is induced by the admissible isomorphism $\varphi\colon O(\mathscr{F})\rightarrow O(\mathscr{F}')$ and $\varphi(\omega)=a\omega'$, $a\in K\backslash\{0\}$.
	
	If $\mathscr{L}$, $\mathscr{L}'$ are graded Lie algebras with the standard gradings then $\overline{\psi} = \text{gr}\,\psi \colon (\mathscr{L}, \mathscr{L}_{0}) \rightarrow (\mathscr{L}', \mathscr{L}'_{0})$ is a homogeneous isomorphism of transitive Lie algebras which is induced by the admissible linear isomorphism $\overline{\varphi} = \text{gr}\,\varphi \colon O(\mathscr{F})\rightarrow O(\mathscr{F}')$ and $\overline{\varphi}(\omega)=a\omega'$, $a\in K\backslash\{0\}$.
\end{theorem}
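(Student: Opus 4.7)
The plan is to handle part (i) by exhibiting $\mathscr{F}$ as a flag invariant under the standard maximal subalgebra and then matching it with the canonical construction of the minimal flag from \cite{K}. Concretely, since $\mathscr{L}\supseteq P^{(1)}(\mathscr{F},\omega)$ acts transitively on $\mathfrak{m}/\mathfrak{m}^{(2)}\cong E$, and $\mathscr{L}_0$ is the stabilizer of the point, the filtration by heights $E_j=\langle x_i\mid m_i>j\rangle$ is $\mathscr{L}_0$-invariant because $\mathscr{L}_0$ preserves the height filtration on $O(\mathscr{F})$. I would then check that each step $E_j/E_{j+1}$ is $\mathscr{L}_0$-irreducible modulo the finer step, which implies $\mathscr{F}$ cannot be refined and must coincide with the minimal flag $\mathscr{F}(\mathscr{L},\mathscr{L}_0)$.

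For part (ii), I would proceed in three stages. First, since $\psi(\mathscr{L}_0)=\mathscr{L}'_0$, $\psi$ induces a linear isomorphism $\overline{\psi}\colon \mathscr{L}/\mathscr{L}_0\to \mathscr{L}'/\mathscr{L}'_0$, which by part (i) respects the minimal flags and thus identifies $\mathscr{F}$ with $\mathscr{F}'$. Second, I would appeal to the general result on transitive Lie algebras in \cite{K}: any isomorphism of transitive pairs $(\mathscr{L},\mathscr{L}_0)\to (\mathscr{L}',\mathscr{L}'_0)$ sitting inside $W(\mathscr{F})$, $W(\mathscr{F}')$, whose induced map on the normal $\mathscr{L}_0$-module respects the minimal flag, is realized by an admissible isomorphism $\varphi\colon O(\mathscr{F})\to O(\mathscr{F}')$ in the sense that $\psi(D)=\varphi\circ D\circ \varphi^{-1}$ for $D\in\mathscr{L}$. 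This is where the hypothesis $n>2$ is invoked, since for $n=2$ the exceptional isomorphisms of $W(n,\overline m)$ would allow extra flexibility.

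The delicate step, and the main obstacle, is verifying $\varphi(\omega)=a\omega'$ for some scalar $a\in K\setminus\{0\}$. Having produced $\varphi$, I would observe that $\varphi$ transports $\widetilde{P}(\mathscr{F},\omega)$ into a transitive non-alternating Hamiltonian Lie algebra containing $\psi(\mathscr{L})=\mathscr{L}'$, namely $\widetilde{P}(\mathscr{F}',\varphi(\omega))$. Since $\mathscr{L}'$ is contained in $\widetilde{P}(\mathscr{F}',\omega')$ as well, and $\mathscr{L}'\supseteq P^{(1)}(\mathscr{F}',\omega')$ is large enough to determine the defining form uniquely up to scalar — using that the relation $D\omega=0$ for all $D\in P^{(1)}$ forces $\varphi(\omega)$ and $\omega'$ to be proportional via a function that must be constant by closedness and nondegeneracy — one concludes $\varphi(\omega)=a\omega'$ for some $a\in K\setminus\{0\}$. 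The rescaling freedom $a$ reflects the fact that $\widetilde{P}(\mathscr{F},\omega)=\widetilde{P}(\mathscr{F},a\omega)$.

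Finally, in the graded setting I would take the associated graded of both $\psi$ and $\varphi$ with respect to the standard filtrations. Because the standard grading is intrinsic (recovered from $\mathscr{L}_0$ and its depth-one successor), $\text{gr}\,\psi$ is a well-defined homogeneous isomorphism, and $\text{gr}\,\varphi$ is admissible and linear (since it preserves the grading). The identity $\varphi(\omega)=a\omega'$ passes to the associated graded, giving $\overline{\varphi}(\omega)=a\omega'$, completing the proof.
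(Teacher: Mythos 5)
Your outline follows the same broad route as the paper (reduce to the embedding theorem of \cite{K}, then show the invariant form is determined up to a scalar), but both halves contain genuine gaps. In (i), your argument---that the height filtration of $E$ is $\mathscr{L}_0$-invariant with irreducible quotients, hence ``cannot be refined''---does not address what the minimal flag $\mathscr{F}(\mathscr{L},\mathscr{L}_0)$ actually is. In \cite{K} the minimal flag is the smallest flag admitting a transitive embedding $(\mathscr{L},\mathscr{L}_0)\hookrightarrow (W(\mathscr{F}'),W(\mathscr{F}')_0)$, and by \cite[Prop.~0.4]{K} it is computed from the $p$-nilpotency degrees of $\mathrm{ad}_L\,l$ for $l\in L_{-1}$ in the associated graded algebra; non-refinability of an $\mathscr{L}_0$-invariant filtration of $E$ is neither necessary nor sufficient for this. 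The paper instead passes to $L=\mathrm{gr}\,\mathscr{L}$, squeezes it between $P(\mathscr{F},\omega(0))^{(1)}$ and $\widetilde{P}(\mathscr{F},\omega(0))$, observes that $\mathrm{ad}_L\,\partial_i$ acts as $\partial_i$ on $\widetilde{O}(\mathscr{F})/K$ (so the nilpotency degrees recover the heights $m_i$), and then applies \cite[Prop.~0.3]{K} to transfer the conclusion from $(L,L_{(0)})$ to $(\mathscr{L},\mathscr{L}_0)$. You need some version of this computation.

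In (ii) there are two problems. First, you place the hypothesis $n>2$ at the embedding theorem, attributing it to ``exceptional isomorphisms of $W(2,\overline m)$''; that is not where it enters. The embedding theorem gives the admissible $\varphi$ for any $n$; the restriction $n>2$ is needed so that $L_{-1}$ is an \emph{absolutely irreducible} $L_0$-module, which is what makes Schur's lemma give $\dim\mathrm{Hom}_K(S^2L_{-1},K)^{L_0}=1$ (see the Remark following the theorem in the paper, which explains exactly when the argument survives for $n=2$). Second, and more seriously, the step where you claim that $P^{(1)}(\mathscr{F}',\omega')$ ``determines the defining form uniquely up to scalar'' via ``a function that must be constant by closedness and nondegeneracy'' assumes the conclusion. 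If $\varphi(\omega)$ and $\omega'$ are both annihilated by $\mathscr{L}'$, what you get a priori is an $O$-linear, $\mathscr{L}'$-equivariant endomorphism relating them, not a scalar function; showing this endomorphism is a constant scalar is precisely the content of the paper's argument via truncated coinduced modules, where the $\mathscr{M}$-morphism $\lambda_Q\colon Q\to Q/\mathfrak{m}Q$ identifies $Q^{\mathscr{L}}$ with $(Q/\mathfrak{m}Q)^{\mathscr{M}}\subseteq\mathrm{Hom}_K(S^2L_{-1},K)^{L_0}$ and yields $\dim Q^{\mathscr{L}}=1$, $Q^{\mathscr{L}}=\langle\omega\rangle$. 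Your closedness/nondegeneracy remark only disposes of the easy final case (a scalar \emph{function} $f$ with $f\omega'$ closed must lie in $K$); it does not produce the proportionality in the first place. The graded addendum at the end of your proposal is fine and matches the paper.
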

\begin{proof}
	(i) Let $L= \text{gr}\mathscr{L} = L_{-1}+ L_{0}+ \ldots$ be the associated graded Lie algebra, $L_{(0)}= \text{gr}\mathscr{L}_{0}$. If $G= P(\mathscr{F}, \omega)$ then gr$G= P(\mathscr{F}, \omega(0))$. Since $(\text{gr}G)^{(1)}\subseteq \text{gr}(G^{(1)})$, $P(\mathscr{F}, \omega(0))^{(1)}\subseteq \text{gr}(P(\mathscr{F}, \omega)^{(1)})$. Analogously, for $G= \widetilde{P}(\mathscr{F}, \omega)$,  gr$G= \widetilde{P}(\mathscr{F}, \omega(0))$. Consequently,
	$$P(\mathscr{F}, \omega(0))^{(1)}\subseteq L \subseteq \widetilde{P}(\mathscr{F}, \omega(0)),$$
	that is $L$ is one of non-alternating Hamiltonian Lie algebras associated with the pair $(\mathscr{F}, \omega(0))$. By \cite[Prop.~0.4]{K} the minimal flag $\mathscr{F}(L, L_{(0)})$ is defined by the $p$-nilpotency degrees of elements ad$_{L}l$, $l\in L_{-1}= W_{-1}$, where $W(\mathscr{F})= W_{-1} + W_{0}+ \ldots$ is the standard grading of $W(\mathscr{F})$. For $\partial_{i}\in W_{-1}$, ad$_{L}\partial_{i}$ acts on $\widetilde{O}(\mathscr{F})\big/ K$ as $\partial_{i}$. Hence, $\mathscr{F}(L, L_{(0)}) = \mathscr{F}$. According to \cite[Prop.~0.3]{K}
	$$\mathscr{F}= \mathscr{F}(L, L_{(0)})\leqslant \mathscr{F}(\mathscr{L}, \mathscr{L}_{0})\leqslant \mathscr{F}.$$
	Thus, $\mathscr{F}(\mathscr{L}, \mathscr{L}_{0})= \mathscr{F}$.
	
	(ii) The proof is based on the theory of truncated coinduced modules (see \cite{K,SS}). Here we mainly concentrate on some key-points of the proof.
	
	According to (i) the natural embedding $\tau\colon (\mathscr{L}, \mathscr{L}_{0}) \rightarrow (W(\mathscr{F}), W(\mathscr{F})_{0})$ is a minimal one. By the embedding theorem \cite[Th.~1.1]{K} $\psi$ is induced by an admissible isomorphism  $\varphi\colon O(\mathscr{F})\rightarrow O(\mathscr{F}')$, $\varphi(l)=\varphi l \varphi^{-1}$, $l\in \mathscr{L}$. Let $(\mathscr{L}, \mathscr{L}_{0})$ be a transitive subalgebra of $W(\mathscr{F})$, $Q$ be an $\mathscr{L}$-module which is at the same time a finitely generated $O(\mathscr{F})$-module. We say that $\mathscr{L}$ acts on $Q$ by derivations if
	$$l(fq)= l(f)q + f(lq), \quad l\in \mathscr{L}, \quad f\in O(\mathscr{F}), \quad q\in Q.$$
	It should be pointed out that as it follows from transitivity of $(\mathscr{L}, \mathscr{L}_{0})$ $Q$ is a free $O(\mathscr{F})$-module. As well as in \cite{SS} we will call $Q$ an $(O(\mathscr{F}), \mathscr{L})$-module. Denote $W(\mathscr{F})$ by $W$, $O(\mathscr{F})$ by $O$.
	
	Let $\mathscr{L}^{\sharp} =$ Prim$U(\mathscr{L})$ be the Lie algebra of the primitive elements of Hopf algebra $U(\mathscr{L})$. $\mathscr{L}^{\sharp}$ is the restricted Lie algebra with respect to the $p$-mapping in $U(\mathscr{L})$. Let $\mathscr{M}$ be the normalizer of $\mathscr{L}_{0}$ in $\mathscr{L}^{\sharp}$. Obviously, $\mathscr{M}$ is a  restricted subalgebra of $\mathscr{L}^{\sharp}$. According to the theory of truncated coinduced modules the category of $(O, \mathscr{L})$-modules is equivalent to the category of finite-dimensional restricted $\mathscr{M}$-modules. The equivalence is given by the $\mathscr{M}$-morphisms $\lambda_{Q}\colon Q\rightarrow Q\big/ \mathfrak{m}Q$. From the universal property of coinduced modules it follows that $\lambda_{Q}$ results in the isomorphism of the spaces of invariants, $Q^{\mathscr{L}}\cong (Q\big/ \mathfrak{m}Q)^{\mathscr{M}}$.
	
	Now we consider the $(O, \mathscr{L})$-module $Q= S\Omega^{2}(\mathscr{F})=$ Hom$_{O}(S_{O}^{2}W, O)$. Let $L= \text{gr}\mathscr{L} = L_{-1}+ L_{0}+ \ldots$. Then we have
	$$Q\big/ \mathfrak{m}Q\cong \text{Hom}_{K}(S^{2}(W\big/ W_{(0)}), O\big/ \mathfrak{m})\cong \text{Hom}_{K}(S^{2}L_{-1}, K)$$
	and $\lambda_{Q}(\omega) = \omega(0)$. Obviously,
	$$(Q\big/ \mathfrak{m}Q)^{\mathscr{M}}\cong \text{Hom}_{K}(S^{2}(W\big/ W_{(0)}), K)^{\mathscr{M}}\subseteq \text{Hom}_{K}(S^{2}(W\big/ W_{(0)}), K)^{\mathscr{L}_{0}}\cong \text{Hom}_{K}(S^{2}L_{-1}, K)^{L_{0}}.$$
	Since $L_{-1}$ is an absolutely irreducible $L_{0}$-module when $n>2$, by Schur's lemma Hom$_{K}(S^{2}L_{-1}, K)^{L_{0}} = \langle \omega(0) \rangle_{K}$ is a one-dimensional space. Since $\omega\in Q^{\mathscr{L}}$,
	$$1\leqslant \dim(Q\big/ \mathfrak{m}Q)^{\mathscr{M}}\leqslant \dim\text{Hom}_{K}(S^{2}L_{-1}, K)^{L_{0}}=1.$$
	Hence, $\dim Q^{\mathscr{L}}=1$ which results in $Q^{\mathscr{L}}= \langle \omega \rangle$. Now,
	$$\mathscr{L}'\varphi(\omega) = \psi(\mathscr{L})\varphi(\omega)= \varphi \mathscr{L} \varphi^{-1}\varphi(\omega) = 0.$$
	Consequently, $\varphi(\omega) = a\omega'$, $a\in K\backslash \{ 0 \}$.
	
	Suppose that $\mathscr{L}$, $\mathscr{L}'$ are graded Lie algebras with the standard gradings. Then $\overline{\psi} = \text{gr}\,\psi \colon \mathscr{L} \rightarrow \mathscr{L}'$ is an isomorphism of graded Lie algebras. Obviously, $\overline{\psi}$ is induced by the linear admissible isomorphism $\overline{\varphi} = \text{gr}\,\varphi \colon O(\mathscr{F})\rightarrow O(\mathscr{F}')$ and $\overline{\varphi}(\omega)=a\omega'$, $a\in K\backslash \{ 0 \}$.
\end{proof}

\textbf{Remark.} The proof of the assertion in (ii) is true for $n=2$ under additional restrictions. All we need is that $L_{-1}$ should be an absolutely irreducible $L_{0}$-module. For $\omega= (dx_{1})^{(2)}+ (dx_{2})^{(2)}$ this is true if the set of heights $\overline{m}= (m_{1}, m_{2})$ is not equal to $(1,1)$.  For $\omega= dx_{1}dx_{2}+ (dx_{2})^{(2)}$ $L_{0}$ acts irreducibly on $L_{-1}$ if $m_{1}> 1$ and $m_{2}> 1$.

Everywhere in what follows we will assume that $\dim E=3$.

\section{Bilinear non-alternating symmetric forms in three variables}

Let $V$ be a three-dimensional vector space over a perfect field $K$ of characteristic 2, $b$ be a bilinear nondegenerate non-alternating symmetric form on $V$, $V^{0}$ be the hyperplane consisting of isotropic vectors (i.e., $V^{0}=\{v\in V ~|~ b(v,v)=0\}$),
$\mathscr{F}\colon 0=V_{0}\subseteq V_{1}\subseteq \ldots \subset V$ be a flag of $V$ such that $V_{q}=V$ for sufficiently large $q$. 
Let $\{ e_{i} \}$ be a basis of $V$ coordinated with the flag $\mathscr{F}$, $m_{i}= \min\{ j ~|~ e_{i}\in V_{j}\}$ be the height of $e_{i}$. The orthogonal complement of a subspace $L\subseteq V$ with respect to $b$ is denoted by $L^{\perp}$. 

Pairs $(\mathscr{F}, b)$ and $(\mathscr{F}', b')$ are called \textit{equivalent} if there is a linear automorphism $\varphi\colon V\rightarrow V$ such that $\varphi(\mathscr{F}) = \mathscr{F}'$ and $b(u,v) =b'(\varphi(u), \varphi(v))$ for any $u,v\in V$. Such an automorphism is called \textit{admissible}.

The flag $\mathscr{F}$ will be called \textit{almost trivial} if it consists of subspaces $0=V_0= \ldots =V_{q-1}\subset V_q=V$, the flag $\mathscr{F}$ is trivial if $q = 1$.

\textbf{Remark.} In the case of the almost trivial flag $\mathscr{F}$, the pair $(\mathscr{F}, b)$ is equivalent to the pair $(\mathscr{F}, b')$, where $b'$ has the identity matrix in any chosen basis of $V$.

We say that a decomposition of a vector space $V$ into the direct sum of its subspaces $P$ and $Q$ is coordinated with the flag $\mathscr{F}$ if $V_{j}=V_{j}\cap P+V_{j}\cap Q$ for all $j\geqslant 0$.

Since $\dim V=3$, the flag $\mathscr{F}$ consists of no more than four different subspaces. Suppose that $\{ e_{i} \}$ is enumerated in such a way that $m_{1}\leqslant m_{2}\leqslant m_{3}$. 
Consider the subspaces $V_{m_{1}}$, $V_{m_{2}}\cap V_{m_{1}}^{\perp}$, $V_{m_{3}}\cap V_{m_{2}}^{\perp} = V_{m_{2}}^{\perp}$ and choose the vector $v$ from one of these subspaces such that $b(v,v)=1$.

\begin{lemma}
	The decomposition $V=\langle v \rangle \oplus \langle v \rangle^{\bot}$ is	
	coordinated with the flag $\mathscr{F}$.
\end{lemma}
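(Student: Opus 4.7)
The plan is to reduce the coordination condition to a simple dichotomy and then verify it by case analysis on which of the three subspaces contains $v$. First, since $b(v,v)=1\neq 0$, we have $v\notin\langle v\rangle^{\perp}$, so $V=\langle v\rangle \oplus \langle v\rangle^{\perp}$ is a genuine direct sum (indeed, the restriction of $b$ to $\langle v\rangle$ is nondegenerate). The coordination $V_j=(V_j\cap\langle v\rangle)+(V_j\cap\langle v\rangle^{\perp})$ will follow from the dichotomy: \emph{for every member $V_j$ of the flag, either $v\in V_j$ or $V_j\subseteq\langle v\rangle^{\perp}$.} Indeed, writing any $u\in V_j$ as $u=b(u,v)v+w$ with $w=u-b(u,v)v\in\langle v\rangle^{\perp}$, the summand $b(u,v)v$ lies in $V_j\cap\langle v\rangle$ exactly when either $b(u,v)=0$ (so $u=w$ already lies in $V_j\cap\langle v\rangle^{\perp}$) or $v\in V_j$.

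Next I exploit that the flag can only jump at $j=m_1,m_2,m_3$; between two consecutive heights $V_j$ is constant, so it suffices to check the dichotomy there. In the first case, when $v\in V_{m_1}$, one has $v\in V_j$ for all $j\geq m_1$, while for $j<m_1$ the subspace $V_j$ is zero and hence trivially contained in $\langle v\rangle^{\perp}$. In the second case, when $v\in V_{m_2}\cap V_{m_1}^{\perp}$, one has $v\in V_j$ for $j\geq m_2$, while for $j<m_2$ the inclusion $V_j\subseteq V_{m_1}\subseteq\langle v\rangle^{\perp}$ holds, the second step by the very choice $v\in V_{m_1}^{\perp}$. In the third case, when $v\in V_{m_2}^{\perp}=V_{m_3}\cap V_{m_2}^{\perp}$, one has $v\in V=V_{m_3}\subseteq V_j$ for $j\geq m_3$, and for $j<m_3$ the chain $V_j\subseteq V_{m_2}\subseteq\langle v\rangle^{\perp}$ again gives what is needed.

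The main obstacle is really just the bookkeeping: one has to notice that the dichotomy criterion above is the right reformulation, and then exploit that each of the three candidate subspaces from which $v$ is chosen is precisely contained in the orthogonal complement of the appropriate $V_{m_k}$, which is exactly what neutralises the ``bad'' piece of the flag in each case. Once the dichotomy is set up, the three cases dispose of every $V_j$ uniformly, and no computation beyond the orthogonality inclusions above is needed.
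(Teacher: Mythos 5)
Your proof is correct and follows essentially the same route as the paper: the paper's argument is exactly your dichotomy (for each flag member $V_i=V_{m_j}$, either $j<q$ gives $V_i\subseteq V_{m_{q-1}}\subseteq\langle v\rangle^{\bot}$, or $j\geqslant q$ gives $v\in V_{m_q}\subseteq V_i$), with the step from the dichotomy to the coordination identity left implicit. You merely spell out that last step via $u=b(u,v)v+w$, which is a welcome but inessential addition.
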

\begin{proof}
	Set $m_{0} =0$. Let $v\in V_{m_{q}}\cap V_{m_{q-1}}^{\perp}$ for some $1 \leqslant q \leqslant 3$.
	Suppose that $V_{i} = V_{m_{j}}$. If $j<q$, then $V_{i}\subseteq V_{m_{q-1}}$ and $V_{i}\subseteq \langle v \rangle^{\bot}$. 	
	If $j\geqslant q$, then $V_{m_{q}}\subseteq V_{i}$ and $\langle v \rangle \subseteq V_{i}$. Hence, for any $i$ we have $V_{i}= V_{i}\cap(\langle v \rangle \oplus \langle v \rangle^{\bot})=(\langle v \rangle\cap V_{i}) \oplus (\langle v \rangle^{\bot}\cap V_{i})$.
\end{proof}

Further, we do not assume that a basis coordinated with the flag $\mathscr{F}$ is ordered according to the heights, unless otherwise indicated.

Suppose that $\{ u, w \}$ is a basis of the space $\langle v \rangle^{\bot}$, which is coordinated with the flag $\overline{\mathscr{F}} = \{ V_{i}\cap \langle v \rangle^{\bot} \}$. Obviously, we can choose $\{ u, w \}$ in such a way that the matrix of the restriction of $b$ to $\langle v \rangle^{\bot}$ will be one of the following matrices
$$M_{0}=\begin{pmatrix} 0 & 1 \\ 1 & 0\end{pmatrix}, ~ M_{1}=\begin{pmatrix} 0 & 1 \\ 1 & 1\end{pmatrix} \text{ and } I=\begin{pmatrix} 1 & 0 \\ 0 & 1\end{pmatrix}.$$

\textbf{Remark.} In the case of the matrix $M_{1}$, if the height of $u$ is not less than that of $w$, then there exists a basis $\{u', w'\}$, which is coordinated with the flag $\overline{\mathscr{F}}$, such that the matrix of the restriction of $b$ to $\langle v \rangle^{\bot}$ will be equal to the matrix $I$.

Let $u = e_{1}$, $w = e_{2}$, $v = e_{3}$. Thus, we proved the following Lemma.
\begin{lemma}\label{lemB}
	There exists a basis $\{e_{1},e_{2}, e_{3}\}$ of the space $V$ coordinated with the flag $\mathscr{F}$ such that the matrix of $b$ is equal to one of the following matrices.
	$$B_{1}=\begin{pmatrix} 0 & 1 & 0 \\ 1 & 0 & 0 \\ 0 & 0 & 1 \end{pmatrix}, ~B_{2}=\begin{pmatrix} 0 & 1 & 0 \\ 1 & 1 & 0 \\ 0 & 0 & 1 \end{pmatrix}, ~B_{3}=\begin{pmatrix} 1 & 0 & 0 \\ 0 & 1 & 0 \\ 0 & 0 & 1 \end{pmatrix}.$$
\end{lemma}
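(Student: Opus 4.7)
The plan is to combine the direct-sum decomposition from the previous lemma with the two-dimensional classification of nondegenerate symmetric bilinear forms in characteristic $2$, taking care that every basis change respects the flag.

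First I would invoke the previous lemma to write $V=\langle v\rangle\oplus\langle v\rangle^{\perp}$ as a flag-coordinated direct sum, where $v$ is chosen from one of $V_{m_{1}}$, $V_{m_{2}}\cap V_{m_{1}}^{\perp}$, $V_{m_{2}}^{\perp}$ with $b(v,v)=1$. Existence of such a $v$ uses that $b$ is non-alternating (so the isotropic cone $V^{0}$ is a proper hyperplane and at least one of the three subspaces contains a non-isotropic element) together with the fact that $K$ is perfect of characteristic $2$ (so any nonzero value of $b(v,v)$ can be rescaled by a square root to $1$). This step reduces the problem to exhibiting a basis $\{u,w\}$ of $U:=\langle v\rangle^{\perp}$ coordinated with the induced flag $\overline{\mathscr{F}}=\{V_{i}\cap U\}$ such that the Gram matrix of $b|_{U}$ is one of $M_{0}$, $M_{1}$, $I$; the desired $B_{1},B_{2},B_{3}$ are then the block matrices obtained by setting $e_{1}=u$, $e_{2}=w$, $e_{3}=v$.

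For the two-dimensional reduction I would split into cases by whether $b|_{U}$ admits a non-isotropic vector. If $b|_{U}$ is alternating, then any flag-coordinated basis $\{u_{0},w_{0}\}$ may be rescaled to satisfy $b(u_{0},w_{0})=1$, producing $M_{0}$. Otherwise one picks a flag-coordinated non-isotropic vector, normalises it to have unit length via the square-root trick, and extends it to a flag-coordinated basis of $U$; depending on whether the second basis vector can be chosen isotropic while respecting $\overline{\mathscr{F}}$, the Gram matrix becomes $I$ or $M_{1}$.

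The main obstacle is the flag-compatibility throughout: ordinary Gram--Schmidt-type moves may require subtracting a multiple of a higher-height vector from a lower-height one, which is not admissible. This is precisely the phenomenon highlighted in the Remark preceding the statement: in the $M_{1}$ case one can pass to $I$ only when the height of $u$ dominates that of $w$; otherwise $M_{1}$ is genuinely forced. Consequently the three outputs $M_{0},M_{1},I$ exhaust all possibilities on $U$, and assembling with $v$ yields the matrices $B_{1},B_{2},B_{3}$ in the coordinated basis $\{e_{1},e_{2},e_{3}\}=\{u,w,v\}$.
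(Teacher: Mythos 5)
Your proposal follows the paper's own route exactly: pick a non-isotropic $v$ in one of $V_{m_{1}}$, $V_{m_{2}}\cap V_{m_{1}}^{\perp}$, $V_{m_{2}}^{\perp}$, invoke the preceding lemma for the coordinated orthogonal splitting $V=\langle v\rangle\oplus\langle v\rangle^{\perp}$, and reduce to the two-dimensional normal forms $M_{0}$, $M_{1}$, $I$ with the same caveat about when $M_{1}$ collapses to $I$ --- this is precisely the discussion the paper gives before the lemma. The one place where you are (like the paper) slightly quick is the existence of the non-isotropic $v$: the three listed subspaces need not span $V$ (e.g.\ for $B_{1}$ with a full flag, $V_{m_{2}}\cap V_{m_{1}}^{\perp}=V_{m_{1}}$), so the claim that the hyperplane $V^{0}$ cannot contain all of them needs a short case check rather than the implicit spanning argument.
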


\textbf{Remark.} In the case of the matrix $B_{2}$, if $m_{3}$ belongs to the segment $[m_{1}, m_{2}]$, then the form $b$ has the matrix $B_{1}$ with respect to the basis $\{e_{1}',e_{2}', e_{3}'\}$, $e_{1}' = e_{1}$, $e_{2}' = e_{2} + e_{3}$, $e_{3}' = e_{3} + e_{1}$, which is also coordinated with $\mathscr{F}$. Thus, we will use a basis of $V$ in which the matrix of $b$ is $B_{2}$ only in the case when $m_{1}< m_{2}$ and $m_{3} \notin [m_{1}, m_{2}]$. 

In view of this remark, we will say that the matrix of $b$ with respect to a
basis $\{e_i\}$ has a \textit{canonical} form if it is equal to one of the matrices $B_{1}$, $B_{2}$, $B_{3}$. Thus, the matrix $B_{2}$ has a canonical form if $m_{1}< m_{2}$ and $m_{3} \notin [m_{1}, m_{2}]$. In the case of the almost trivial flag we will say that the matrix of $b$ has a canonical form if it is equal to $B_{3}$.

\begin{theorem}\label{lemm5.2}
	Let $b$ and $b'$ be nondegenerate non-alternating symmetric forms on $V$, $\mathscr{F}$ and $\mathscr{F}'$ be flags of $V$. Suppose that $\{e_{1},e_{2}, e_{3}\}$ $(\{e_{1}',e_{2}', e_{3}'\})$ is a basis of $V$, which is coordinated with the flag $\mathscr{F}$ $(\mathscr{F}')$, such that the matrix of $b$ $(b')$ has a canonical form. The following statements are true.
	
	(1) If the forms $b$ and $b'$ have the matrices equal to $B_{1}$, then the pairs $(\mathscr{F}, b)$ and $(\mathscr{F}', b')$ are equivalent if and only if $\{ m_{1}, m_{2} \} = \{ m'_{1}, m'_{2} \}$ and $m_{3}= m'_{3}$.
	
	(2) If the forms $b$ and $b'$ have the matrices equal to $B_{2}$, then the pairs $(\mathscr{F}, b)$ and $(\mathscr{F}', b')$ are equivalent if and only if $m_{i}= m'_{i}$, $i=1,2,3$.
	
	(3) If the forms $b$ and $b'$ have the matrices equal to $B_{3}$, then the pairs $(\mathscr{F}, b)$ and $(\mathscr{F}', b')$ are equivalent if and only if $\{ m_{1}, m_{2}, m_{3} \} = \{ m'_{1}, m'_{2}, m_{3}' \}$.
	
	(4) If the forms $b$ and $b'$ have different matrices, then the pairs $(\mathscr{F}, b)$ and $(\mathscr{F}', b')$ are not equivalent.
\end{theorem}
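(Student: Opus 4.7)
The approach is to extract invariants of $(\mathscr{F},b)$ that any admissible linear isomorphism $\varphi$ must preserve and to verify these invariants determine the pair up to equivalence. Because $\varphi$ is an isometry from $b$ to $b'$ and carries $\mathscr{F}$ onto $\mathscr{F}'$, it must preserve the hyperplane of isotropic vectors $V^{0}$, its one-dimensional orthogonal complement $(V^{0})^{\perp}$ (well defined and disjoint from $V^{0}$ since $b|_{V^{0}}$ is alternating nondegenerate), and every $V_{j}$. Consequently the height multiset $\{m_{1},m_{2},m_{3}\}$, the pair of heights induced by $\mathscr{F}$ on $V^{0}$, and the height of the line $(V^{0})^{\perp}$ are all invariants.

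\textbf{Necessity of (1)--(3).} I read these invariants off from each canonical basis. For $B_{1}$: $V^{0}=\langle e_{1},e_{2}\rangle$ with $V^{0}$-heights $\{m_{1},m_{2}\}$ and $(V^{0})^{\perp}=\langle e_{3}\rangle$ of height $m_{3}$, so the invariant $(\{m_{1},m_{2}\},m_{3})$ must match. For $B_{2}$: a direct computation gives $V^{0}=\langle e_{1},e_{2}+e_{3}\rangle$ and $(V^{0})^{\perp}=\langle e_{1}+e_{3}\rangle$; combining their heights with the total multiset and the canonicity constraint $m_{1}<m_{2}$, $m_{3}\notin[m_{1},m_{2}]$ pins down each $m_{i}$ individually. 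For $B_{3}$: the invariants reduce to the multiset $\{m_{1},m_{2},m_{3}\}$ alone.

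\textbf{Sufficiency of (1)--(3).} Given matching invariants I define $\varphi$ as the linear map sending the canonical basis of the first pair to that of the second, pre-composed if needed by a permutation of canonical basis vectors. The permitted permutations are exactly those under which the canonical matrix is symmetric: the transposition $(1\,2)$ in case $B_{1}$, the identity in case $B_{2}$ (no nontrivial permutation of the canonical basis preserves $B_{2}$), and the full group $S_{3}$ in case $B_{3}$. Each such $\varphi$ matches heights by the hypothesis and intertwines the forms by construction.

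\textbf{Part (4).} This is the substantial part and the main obstacle. I distinguish the three canonical matrices by two invariants of $(\mathscr{F},b)$. First, the pair has canonical form $B_{1}$ iff the orthogonal decomposition $V=V^{0}\oplus(V^{0})^{\perp}$ is coordinated with $\mathscr{F}$, equivalently iff the height of $(V^{0})^{\perp}$ equals the external height $\min\{j:V_{j}\not\subseteq V^{0}\}$; this fails in $B_{2}$ because the generator $e_{1}+e_{3}$ of $(V^{0})^{\perp}$ has height $\max(m_{1},m_{3})$ strictly greater than the external height, and fails in $B_{3}$ because $(V^{0})^{\perp}$ has height $\max m_{i}$ while the external height equals $\min m_{i}$. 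Second, the pair has canonical form $B_{3}$ iff $V$ admits a $b$-orthogonal basis coordinated with $\mathscr{F}$, a property that is direct from the definition and that fails in $B_{1}$ and $B_{2}$ (verifiable by the same coordination test applied to any candidate orthogonal basis). Together these criteria separate $B_{1}$, $B_{2}$, $B_{3}$; the remaining labor is the case analysis through the two $B_{2}$-subcases ($m_{3}<m_{1}$ versus $m_{3}>m_{2}$) and the boundary cases of coincident heights, verifying that the distinguishing invariants remain separated in every instance.
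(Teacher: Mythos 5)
Your argument is correct and shares its backbone with the paper's proof: both rest on the observation that an equivalence must preserve $V^{0}$ and $(V^{0})^{\perp}$, both prove sufficiency by sending one canonical basis to the other up to a permutation preserving the canonical matrix, and both isolate $B_{1}$ by whether the decomposition $V=V^{0}\oplus(V^{0})^{\perp}$ is coordinated with the flag. You diverge in the remaining invariants. The paper separates $B_{2}$ from $B_{3}$ and handles the necessity direction of (1)--(2) via the numbers $n_{r}=\dim V_{s_{r}}\cap V_{s_{r-1}}^{\perp}-\dim V_{s_{r}}\cap V_{s_{r-1}}^{\perp}\cap V^{0}$, whereas you use the flag-heights induced on $V^{0}$ together with the height of the line $(V^{0})^{\perp}$ for necessity, and the existence of a $b$-orthogonal basis coordinated with $\mathscr{F}$ as the marker for $B_{3}$. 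Both choices work; yours make the recovery of the individual heights in case (2) more transparent (the perp height identifies which element of the $V^{0}$-height pair is $m_{3}$, respectively $m_{1}$, in the two canonical subcases $m_{3}>m_{2}$ and $m_{3}<m_{1}$), at the cost of an extra verification that canonical $B_{1}$ and $B_{2}$ admit no orthogonal coordinated basis. One caveat you should state explicitly: when $m_{1}=m_{2}=m_{3}$ the decomposition for $B_{3}$ \emph{is} coordinated with the (almost trivial) flag, so your criterion ``coordinated if and only if $B_{1}$'' survives only because in that case $B_{3}$ is by definition the unique canonical form and no comparison with $B_{1}$ or $B_{2}$ can arise; the paper sidesteps this by treating the full-flag case and the case of exactly two coincident heights separately.
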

\begin{proof}	
	(4) Since $\dim V =3$, $V= V^{0} \oplus (V^{0})^{\bot}$. Let $V^{1} = (V^{0})^{\bot}$. In the case of the full flag, for the form with the matrix $B_{1}$ the decomposition $V= V^{0} \oplus V^{1}$ is coordinated with the flag, but for the form with the matrix $B_{2}$ or with the matrix $B_{3}$ it is not true. Obviously, $n_{r} =\dim V_{s_{r}}\cap V_{s_{r-1}}^{\perp} - \dim V_{s_{r}}\cap V_{s_{r-1}}^{\perp}\cap V^{0}$, where $s_{1} <s_{2} <s_{3}\in \{ m_{1}, m_{2}, m_{3} \}$, are invariants (similar to invariants $n_{qq}^{1}$ from \cite[Section~1]{dep}). For the form $b$ with the matrix $B_{3}$ we have $n_{1}= n_{2}= n_{3}= 1$, but for the form $b$ with the matrix $B_{2}$ there is $i$ such that $n_{i}= 0$.
	
	If $m_{i}= m_{j}\neq m_{k}$, where $\{ i, j, k \} = \{ 1, 2, 3 \}$, then for the form with the matrix $B_{1}$ the decomposition $V= V^{0} \oplus V^{1}$ is coordinated with the flag, but for the form with the matrix $B_{3}$ it is not true.
	
	(1-3) If the conditions are fulfilled, then the isomorphism which maps $e_{i}$ into $e'_{i}$ (up to permutation) is an equivalence.
	
	Let in the cases (1) and (2) $\{ m_{1}, m_{2}, m_{3} \} = \{ m'_{1}, m'_{2}, m_{3}' \}$, but the conditions do not hold. Then $n_{i}\neq n_{i}'$ for some $i$. Consequently, the pairs are not equivalent. The statement (3) is trivial.
\end{proof}

\section{Non-alternating Hamiltonian forms with non-constant coefficients in three variables}

Put 
$$\overline{x}_{i}=x_{i}^{(2^{m_{i}}-1)}, \quad \langle x_{1}, x_{2}, x_{3} \rangle=E, \quad m_{i}=\min\{ j ~|~ x_{i}\notin E_{j} \}. $$

Let $\omega$ be the non-alternating Hamiltonian form,
$$\omega= \omega(0) + d\varphi + b_{12}\overline{x}_{1}\overline{x}_{2}dx_{1}dx_{2} + b_{13}\overline{x}_{1}\overline{x}_{3}dx_{1}dx_{3} + b_{23}\overline{x}_{2}\overline{x}_{3}dx_{2}dx_{3},$$ 
where $b_{ij}\in K$, $\varphi \in \mathfrak{m}^{(2)}S\Omega^{1}(\mathscr{F})$, and $\omega(0)$ is equal to one of the following forms (see Lemma~\ref{lemB} and Theorem~\ref{lemm5.2})
\begin{eqnarray}\label{svost1}
\omega(0)= dx_{1}dx_{2} + dx_{3}^{(2)},
\end{eqnarray}
\begin{eqnarray}\label{svost2}
\omega(0)= dx_{1}dx_{2} +dx_{2}^{(2)}+ dx_{3}^{(2)},
\end{eqnarray}
\begin{eqnarray}\label{svost3}
\omega(0)= dx_{1}^{(2)}+ dx_{2}^{(2)} + dx_{3}^{(2)}.
\end{eqnarray}

Here $dx_{i}^{(2)}$ means $(dx_{i})^{(2)}$. Recall that in the case of (\ref{svost2}), $m_{1}< m_{2}$ and $m_{3} \notin [m_{1}, m_{2}]$. 

\begin{theorem}\label{thth om}
	Let $\omega$ be a non-alternating Hamiltonian form in three variables of heights $m_{1},m_{2},m_{3}$. If the corresponding flag $\mathscr{F}$ is not almost trivial, then $\omega$ is equivalent to only one of the following forms
	$$\omega_{1}= dx_{1}dx_{2} + dx_{3}^{(2)},$$
	$$\omega_{2}= dx_{1}dx_{2} +dx_{2}^{(2)}+ dx_{3}^{(2)} ~~(m_{1}<m_{2}, m_{3}\notin [m_{1}, m_{2}]),$$
	$$\omega_{3}= dx_{1}^{(2)}+ dx_{2}^{(2)} + dx_{3}^{(2)},$$
	$$\omega_{4}= dx_{1}dx_{2} + dx_{3}^{(2)} + \overline{x}_{1}x_{3}dx_{1}dx_{3} ~~(m_{3}=1).$$
	The two forms $\omega_{i}$, $\omega_{j}$, $i\neq j$ are not equivalent.		
	If $\mathscr{F}$ is almost trivial but non-trivial, then $\omega$ is equivalent to the form $\omega_{3}$. 	
	If $\mathscr{F}$ is trivial, then $\omega$ is equivalent to one of the forms $\omega_{3}$, $\omega_{4}$, with the forms $\omega_{3}$ and $\omega_{4}$ being non-equivalent.
\end{theorem}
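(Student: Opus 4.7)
The strategy is to normalize $\omega$ step by step with the available reduction theorems and then run a case analysis on the three possibilities for $\omega(0)$ listed in Lemma~\ref{lemB} and Theorem~\ref{lemm5.2}. Since $\varphi\in\mathfrak{m}^{(2)}S\Omega^{1}$, I first apply Theorem~\ref{thth 3.1} with $j=1$ to produce $\sigma\in G'$ with $\sigma\omega=\omega(0)+d\widetilde{\psi}+\widetilde{\eta}$, $\widetilde{\psi}\in\widetilde{\mathfrak{m}}^{(2)}S\Omega^{1}$, $\widetilde{\eta}\in\langle dz_{r}dz_{s}\rangle$. Theorem~\ref{thth 3.2}(2) then replaces $\omega$ by $\omega(0)+\sum_{i\in I,\,j\notin I}b_{ij}\,dz_{i}\,dz_{j}+\sum_{i<j,\,i,j\in I}b_{ij}\,dz_{i}\,dz_{j}$ with $b_{ij}\in K$.

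Next I split on $\omega(0)$. For $\omega(0)=\omega_{2}$ the assumption $m_{1}<m_{2}$ forces $m_{2}\geqslant 2$, so the element $2\in I=\{2,3\}$ has $m_{2}>1$ and Theorem~\ref{thth 3.2}(1) gives $\omega\sim\omega_{2}$. For $\omega(0)=\omega_{3}$ with $\mathscr{F}$ non-trivial, some $m_{i}>1$ and Theorem~\ref{thth 3.2}(1) again gives $\omega\sim\omega_{3}$. For $\omega(0)=\omega_{1}$ with $m_{3}>1$, the same theorem yields $\omega\sim\omega_{1}$. The substantive subcases are (a) $\omega(0)=\omega_{1}$ with $m_{3}=1$, and (b) $\omega(0)=\omega_{3}$ with $\mathscr{F}$ trivial. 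In (a) the residue is $\omega_{1}+b_{13}\,dz_{1}\,dz_{3}+b_{23}\,dz_{2}\,dz_{3}$: if both coefficients vanish then $\omega\sim\omega_{1}$, otherwise I would exhibit admissible linear substitutions on $(x_{1},x_{2})$ that preserve $dx_{1}dx_{2}$ (here using that $dx_{i}\cdot dx_{i}=2\,dx_{i}^{(2)}=0$ in characteristic two) and respect the heights, possibly after swapping $x_{1}\leftrightarrow x_{2}$ when $m_{1}=m_{2}$, to normalize $(b_{13},b_{23})$ to $(1,0)$ and so obtain $\omega_{4}$. For (b), all three canonical $\omega(0)$ are mutually equivalent under a trivial flag, so I re-present $\omega$ with $\omega(0)=\omega_{1}$ and apply the same normalization, producing either $\omega_{3}$ or $\omega_{4}$.

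For the non-equivalence statement, any admissible $\sigma$ realizing $\sigma\omega=\omega'$ descends modulo $\mathfrak{m}$ to an equivalence of initial bilinear forms $\omega(0)\sim\omega'(0)$, so Theorem~\ref{lemm5.2} distinguishes every pair whose canonical matrices among $B_{1},B_{2},B_{3}$ differ; this takes care of all pairs among $\omega_{1},\omega_{2},\omega_{3}$ and, outside the trivial-flag case, also separates $\omega_{4}$ from $\omega_{2}$ and $\omega_{3}$. The remaining claim $\omega_{1}\not\sim\omega_{4}$ (and $\omega_{3}\not\sim\omega_{4}$ when $\mathscr{F}$ is trivial) is the main obstacle, since both sides share the same initial form. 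My plan is to argue by contradiction: if $\sigma\omega_{1}=\omega_{4}$, then $\sigma(0)$ lies in the orthogonal group of $\omega_{1}(0)$, and I would expand $\sigma$ order by order in $\mathfrak{m}$ and verify that no higher-order correction can generate the non-constant coefficient $\overline{x}_{1}x_{3}$ of $dx_{1}dx_{3}$ in $\omega_{4}$ starting from the constant-coefficient form $\omega_{1}$. Equivalently, one has to show that $\omega_{4}$ is not conjugate to its own initial form, and checking this via the explicit action of $G'$ on the perturbation is where I expect the main computational effort to lie.
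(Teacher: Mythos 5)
Your normalization pipeline (Theorem~\ref{thth 3.1} with $j=1$, then Theorem~\ref{thth 3.2}(2), then a case split on the canonical form of $\omega(0)$ and explicit height-respecting substitutions to reduce the residue $b_{13}\,dz_{1}dz_{3}+b_{23}\,dz_{2}dz_{3}$ to a single term with coefficient $1$) is essentially the paper's argument, and the trivial-flag reduction to $\omega(0)$ of type $B_{1}$ is also the same. One local slip: for $\omega(0)=\omega_{2}$ the set $I=\{i\mid\overline{\omega}_{ii}\neq0\}$ is computed from the \emph{inverse} matrix, so $I=\{1,3\}$, not $\{2,3\}$; your conclusion survives because the constraints $m_{1}<m_{2}$, $m_{3}\notin[m_{1},m_{2}]$ force $m_{1}>1$ or $m_{3}>1$, but the justification as written refers to an index that is not in $I$.

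The genuine gap is the non-equivalence $\omega_{1}\not\sim\omega_{4}$ (and $\omega_{3}\not\sim\omega_{4}$ for the trivial flag), which is the central new content of the theorem — the existence of a form not equivalent to its linear part. You correctly reduce to the case $m_{3}=m_{3}'=1$ via the linear parts, but then you only sketch a plan (``expand $\sigma$ order by order in $\mathfrak{m}$ and verify that no higher-order correction can generate $\overline{x}_{1}x_{3}\,dx_{1}dx_{3}$'') and explicitly defer the computation. That computation is exactly the statement to be proved, and it is not clear it closes: the group $G'$ acts nontrivially on the space of perturbations $\langle dz_{r}dz_{s}\rangle$, and a direct order-by-order analysis of all admissible automorphisms (not just those in $G'$) is delicate. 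The paper avoids this entirely by passing to an invariant of the associated Lie algebras: Theorem~\ref{simp} shows that when $m_{3}=1$ the algebra $P(3,\overline{m},\omega_{4})$ is simple while $P(3,\overline{m},\omega_{1})$ is not (its derived algebra $P^{(1)}$ is a proper ideal of codimension $1$), and since an equivalence of forms induces an isomorphism $P(\mathscr{F},\omega)\cong P(\mathscr{F}',\omega')$, the forms cannot be equivalent; the same argument separates $\omega_{3}$ from $\omega_{4}$ when $\overline{m}=(1,1,1)$. To complete your proof you either need to carry out the automorphism computation in full or import a discriminating invariant of this kind.
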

\begin{proof}
	By Theorem~\ref{thth 3.1} and Theorem~\ref{thth 3.2} if $m_{3}>1$ in the case of (\ref{svost1}) and $(m_{1}, m_{2}, m_{3})\neq (1,1,1)$ in the case of (\ref{svost3}), then the form $\omega$ is equivalent to the form $\omega(0)$. In the case of (\ref{svost2}), the form $\omega$ is equivalent to the form $\omega(0)$.
	
	Suppose that $\omega$ is a form such that $\omega(0)$ is of type (\ref{svost1}) and $m_{3}=1$. Theorem~\ref{thth 3.1} and Theorem~\ref{thth 3.2} imply that $\omega$ is equivalent to the form $\omega(0) +  c_{13}\overline{x}_{1}x_{3}dx_{1}dx_{3} + c_{23}\overline{x}_{2}x_{3}dx_{2}dx_{3}$.
	
	If $c_{13}\neq 0$, $c_{23} \neq 0$ and $m_{1} \leqslant m_{2}$, then the admissible automorphism $x_{1} \mapsto x_{1} + (\tilde{c}_{23}/\tilde{c}_{13})x_{2}^{(2^{m_{2} - m_{1}})}$, where $(\tilde{c}_{23})^{2^{m_{1}}} = c_{23}$, $(\tilde{c}_{13})^{2^{m_{1}}} = c_{13}$, $x_{i} \mapsto x_{i}$, $i= 2,3$ will lead to vanishing $c_{23}\overline{x}_{2}x_{3}dx_{2}dx_{3}$. If $m_{1} > m_{2}$, then the admissible automorphism $x_{2} \mapsto x_{2} + (\tilde{c}_{13}/\tilde{c}_{23})x_{1}^{(2^{m_{1} - m_{2}})}$, where $(\tilde{c}_{23})^{2^{m_{2}}} = c_{23}$, $(\tilde{c}_{13})^{2^{m_{2}}} = c_{13}$, $x_{i} \mapsto x_{i}$, $i= 1,3$ will lead to vanishing $c_{13}\overline{x}_{1}x_{3}dx_{1}dx_{3}$.
	
	Consequently, a form $\omega$ is equivalent to the form $dx_{1}dx_{2} + dx_{3}^{(2)} + c\overline{x}_{1}x_{3}dx_{1}dx_{3}$.
	Obviously, the two forms $\omega(0)+ c\overline{x}_{1}x_{3}dx_{1}dx_{3}$ and $\omega(0)+ c\overline{x}_{2}x_{3}dx_{2}dx_{3}$, where $\omega(0)= dx_{1}dx_{2} + dx_{3}^{(2)}$ are equivalent. The corresponding admissible isomorphism $\sigma\colon O(3,(m_{1},m_{2},1))\rightarrow O(3,(m_{2},m_{1},1))$ is induced by the permutation $x_{1}\mapsto x_{2}$, $x_{2}\mapsto x_{1}$, $x_{3}\mapsto x_{3}$. Furthermore, if $c\neq 0$, then the admissible automorphism $\tilde{c}x_{1} \mapsto x_{1}$, $x_{2} \mapsto \tilde{c}x_{2}$, where $(\tilde{c})^{2^{m_{1}}} = c$, $x_{3} \mapsto x_{3}$ will lead to $c=1$.
	
	Suppose that $\omega$ is a form such that $\omega(0)$ is of type (\ref{svost3}) and $(m_{1},m_{2},m_{3})=(1,1,1)$. In the case of the trivial flag, any linear isomorphism is admissible and $\omega$ is equivalent to $\omega'$ with $\omega'(0)$ of type (\ref{svost1}).
	Consequently, a form $\omega$ is equivalent to the form $\omega'' = dx_{1}dx_{2} + dx_{3}^{(2)} + cx_{1}x_{3}dx_{1}dx_{3}$. If $c\neq 0$, then $c= 1$. Note that in this case the matrix of $\omega''(0)$ does not have the canonical form. If $c= 0$, then we say that $\omega$ is equivalent to the form $dx_{1}^{(2)} + dx_{2}^{(2)} + dx_{3}^{(2)}$.
	
	Suppose that $\mathscr{F}$ is not almost trivial and show that the forms $\omega_{i}$, $\omega_{j}$, $i\neq j$ are not equivalent, $i, j= 1,2,3,4$. The fact that the forms $\omega_{i}$, $i=1,2,3$ are not equivalent follows from Theorem~\ref{lemm5.2}. 
	
	Let $\varphi$ be an admissible isomorphism, $\varphi\colon O(\mathscr{F}) \rightarrow O(\mathscr{F}')$, $\varphi_{0}$ be the linear part of $\varphi$, $\varphi(x_{i}) \equiv \varphi_{0}(x_{i})(\text{mod }\mathfrak{m}^{(2)})$.  Since $(\varphi(\omega))(0) = \varphi_{0}(\omega(0))$, we conclude that if linear parts of the forms $\omega$, $\omega'$ are not equivalent, then $\omega$ and $\omega'$ are not equivalent, either. So, all what remains to be considered is the pair $\omega_{1}$, $\omega_{4}$. Suppose that the forms $\omega_{1}$ and $\omega_{4}$ are equivalent. Hence, $\omega_{1}(0) = dx_{1}dx_{2} + dx_{3}^{(2)}$ is equivalent to $\omega_{4}(0) = dx_{1}'dx_{2}' + (dx_{3}')^{(2)}$, where $\{x_{1}, x_{2}, x_{3}\}$ and $\{x_{1}', x_{2}', x_{3}'\}$ are bases of $E$ coordinated with the flags $\mathscr{F}$, $\mathscr{F}'$, respectively. By Theorem~\ref{lemm5.2} $\{ m_{1}, m_{2} \} = \{ m'_{1}, m'_{2} \}$ and $m_{3}= m'_{3}$. Since $m_{3}'= 1$, we have $m_{3}= 1$. Theorem~\ref{simp} of the next section implies that $P(3,\overline{m},\omega_{4})$ is a simple Lie algebra and $P(3,\overline{m},\omega_{1})$ is not a simple Lie algebra.
	
	If $\mathscr{F}$ is almost trivial and $\overline{m}\neq (1,1,1)$, then $\omega(0)= \omega_{3}$ and, as it has been shown before, $\omega$ is equivalent to $\omega_{3}$. If $\overline{m}= (1,1,1)$, then the matrix of $\omega_{4}(0)$ does not have the canonical form. The fact that the forms $\omega_{3}$ and $\omega_{4}$ are not equivalent follows from Theorem~\ref{simp} of the next section according to which $P(3,\overline{1},\omega_{4})$ is a simple Lie algebra and $P(3,\overline{1},\omega_{3})$ is not a simple Lie algebra.
\end{proof}

\textbf{Remarks.} 1. The conditions under which the forms $\omega_{i}'$, $\omega_{i}''$ of the same type are equivalent for $i=1,2,3$ are obtained in Theorem~\ref{lemm5.2}. The problem of equivalence of the forms $\omega_{4}'$, $\omega_{4}''$ is not being treated in this paper, since some other technique is required.

2. Let $a\in K\diagdown \{0\}$, then the form $a\omega_{i}$ is equivalent to $\omega_{i}$, $i=1,2,3,4$. It is obvious for $i=1,2,3$. Let $\varphi$ be the admissible automorphism of $O(3, (m_{1},m_{2},1))$ such that $\varphi(x_{i})= c_{i}x_{i}$, $c_{1}=1$, $c_{2}=a$, $c_{3}=\sqrt{a}$. Then $\varphi(\omega_{4})= a\omega_{4}$.

\section{Non-alternating Hamiltonian Lie algebras in three variables}

In this section we will prove the fact that the Lie algebras which correspond to the nonequivalent forms are not isomorphic.

The algebra $P(3,\overline{m},\omega)$ is identified with the algebra $O(\mathscr{F})\big/ K \cong \mathfrak{m}$ with the Poisson bracket, corresponding to the form $\omega$ (see (\ref{eqeq 0.7})):
$$\{f,g\}_{1}= \partial_{1}f\partial_{2}g +\partial_{2}f\partial_{1}g + \partial_{3}f\partial_{3}g,$$
$$\{f,g\}_{2}= \partial_{1}f\partial_{2}g +\partial_{2}f\partial_{1}g + \partial_{1}f\partial_{1}g +\partial_{3}f\partial_{3}g,$$
$$\{f,g\}_{3}= \partial_{1}f\partial_{1}g+ \partial_{2}f\partial_{2}g + \partial_{3}f\partial_{3}g,$$
$$\{f,g\}_{4}= \partial_{1}f\partial_{2}g +\partial_{2}f\partial_{1}g + \partial_{3}f\partial_{3}g + \overline{x}_{1}x_{3}(\partial_{2}f\partial_{3}g +\partial_{3}f\partial_{2}g).$$

If $0\neq f\in \mathfrak{m}$, let $\lambda(f)$ be the nonzero homogeneous part of $f$ of the least degree.

\begin{theorem}\label{simp}
	Let $L= P(3,\overline{m},\omega_{i})$, $i=1,2,3,4$. 
	If $m_{3}>1$ in the case of $i=1$ and $\overline{m}\neq (1,1,1)$ in the case of $i=3$, then $L$ is a simple Lie algebra of dimension $2^{m_{1}+ m_{2}+ m_{3}}-1$. 
	If $m_{3}=1$ and $\overline{m}\neq (1,1,1)$, then $P^{(1)}(3,\overline{m},\omega_{1})$ is a simple Lie algebra of dimension $2^{m_{1}+ m_{2}+ 1}-2$. If $\overline{m}= (1,1,1)$, then $P^{(1)}(3,\overline{m},\omega_{3})$ is not simple.
\end{theorem}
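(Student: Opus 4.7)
The plan proceeds in three stages: dimension computation, simplicity via a lowest-degree reduction, and an explicit ideal in the exceptional case.

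\textbf{Dimensions.} The identification $P(3,\overline{m},\omega_i) \cong O(\mathscr{F})/K$ via $f \mapsto D_f$ is already in hand, and the divided-power monomials $x^{(\alpha)}$ with $0 \leq \alpha_k < 2^{m_k}$, not all zero, form a basis of $\mathfrak{m}$; hence $\dim P = 2^{m_1+m_2+m_3}-1$. For $\omega_1$ with $m_3 = 1$, to see $\dim P/P^{(1)} = 1$, I would introduce the linear functional $\varphi \colon P \to K$ extracting the coefficient of $\overline{x}_1\overline{x}_2 x_3 = x_1^{(2^{m_1}-1)} x_2^{(2^{m_2}-1)} x_3$. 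In $\{f,g\}_{1} = \partial_{1}f\partial_{2}g +\partial_{2}f\partial_{1}g + \partial_{3}f\partial_{3}g$, the last term carries no $x_3$-factor once $m_3=1$, so does not contribute to $\varphi$; and after separating $f = f_0 + f_1 x_3$, $g = g_0 + g_1 x_3$, the first two terms reduce to the classical alternating Hamiltonian bracket on $O(2,(m_1,m_2))$ of which $\overline{x}_1\overline{x}_2$ is a standard ``top integral'', so its coefficient vanishes. Thus $P^{(1)} \subseteq \ker \varphi$, and the matching lower bound on $\dim P^{(1)}$ from stage (ii) below forces equality and dimension $2^{m_1+m_2+1}-2$.

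\textbf{Simplicity.} Let $J \subseteq L$ be a nonzero ideal and pick $f \in J$ with $\lambda(f) \neq 0$ of minimal degree $d$. Bracketing with $x_k$ in any of the four forms $\omega_i$ yields, up to invertible entries of $(\overline{\omega}_{ij})$, a partial derivative of $f$; since in the divided-power formalism every nonzero polynomial of positive degree has some nonzero partial, iterating drives $d$ down to $1$. Hence $J$ contains a nonzero linear element $v \in \langle x_1, x_2, x_3 \rangle$. Bracketing $v$ with all monomials in $O(\mathscr{F})$ then produces all partial derivatives of all elements of $L$; a straightforward case analysis, combined with brackets of higher-degree elements to reach the very top monomials, shows $J$ exhausts $L$ (or $P^{(1)}$ when $L = P^{(1)}$, so no codimension-one obstruction remains). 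For $\omega_4$, the extra term $\overline{x}_{1}x_{3}(\partial_{2}f\partial_{3}g + \partial_{3}f\partial_{2}g)$ couples the $x_3$-direction with $x_1$ and annihilates the analogous functional $\varphi$, which is precisely what allows $P(3,\overline{m},\omega_4)$ itself to be simple.

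\textbf{Non-simplicity for $\overline{m} = (1,1,1)$ and $\omega_3$.} Here $\mathfrak{m}$ has dimension $7$ with basis $\{x_i,\ x_ix_j,\ x_1x_2x_3\}$, and the bracket $\{f,g\}_3 = \sum_{i} \partial_{i}f\,\partial_{i}g$ is especially rigid because $x_k^{2}=0$. A direct check of all bracket types shows $P^{(1)}$ coincides with the $6$-dimensional subspace $\langle x_i, x_ix_j \rangle$, and that its linear part $\mathfrak{a} = \langle x_1,x_2,x_3 \rangle$ is abelian (every $\{x_i,x_j\}_3$ lies in $K$ and so is trivial in $\mathfrak{m}$) and $P^{(1)}$-stable, since $\{x_i, x_jx_k\}_{3}=\partial_i(x_jx_k) \in \mathfrak{a}$. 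Thus $\mathfrak{a}$ is a proper nonzero ideal of $P^{(1)}$.

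The main obstacle I expect is matching the functional $\varphi$ in the $\omega_1$, $m_3=1$ case with the actual dimension of $P^{(1)}$ produced by the reduction; once the equality $P^{(1)}=\ker\varphi$ is pinned down, everything else is either a uniform lowest-degree argument or a short direct computation on small-dimensional bases.
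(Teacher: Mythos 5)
Your overall strategy is sound, but it is worth noting that the paper itself proves much less than you do: for $i=1,2,3$ it simply cites \cite[Section~4]{dep}, and the only case argued in the text is $\omega_{4}$, where the method is exactly yours --- bracket a nonzero ideal element with $x_{1},x_{3},x_{2}$ (whose adjoint actions are $\partial_{2}$, $\partial_{3}+\overline{x}_{1}x_{3}\partial_{2}$, $\partial_{1}+\overline{x}_{1}x_{3}\partial_{3}$) to force $x_{1}\in J$, deduce $x_{2},x_{3}\in J$, pass to the associated graded algebra to get every monomial except the top one modulo higher-degree tails, and finish with the single explicit bracket $\{x_{2},\overline{x}_{2}x_{3}\}=\overline{x}_{1}\overline{x}_{2}x_{3}$. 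Your self-contained additions are correct and genuinely informative: the functional $\varphi$ extracting the coefficient of $\overline{x}_{1}\overline{x}_{2}x_{3}$ does vanish on all brackets $\{f,g\}_{1}$ when $m_{3}=1$ (the $\partial_{3}f\,\partial_{3}g$ term carries no $x_{3}$, and the two cross terms cancel by Lucas' theorem since $\binom{2^{m}-1}{k}\equiv 1 \pmod 2$), which gives $P^{(1)}\subseteq\ker\varphi$ and the dimension $2^{m_{1}+m_{2}+1}-2$; and for $\overline{m}=(1,1,1)$, $\omega_{3}$, your verification that $P^{(1)}=\langle x_{i},x_{i}x_{j}\rangle$ with $\langle x_{1},x_{2},x_{3}\rangle$ a proper abelian ideal is a clean explicit witness of non-simplicity that the paper leaves to the reference. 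Your closing observation that the extra term of $\{\,,\,\}_{4}$ is what lets the top monomial be reached is precisely the point of the paper's computation $\{x_{2},\overline{x}_{2}x_{3}\}=\overline{x}_{1}\overline{x}_{2}x_{3}$.

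The one place where your argument is asserted rather than proved is the middle of the simplicity step. From the degree-lowering you only obtain an element of $J$ whose \emph{lowest-degree part} is linear, not a genuine linear element; the paper bridges this by working in $\mathrm{gr}\,J$ (using $L_{-1}\subseteq\mathrm{gr}\,J\Rightarrow L_{i}\subseteq\mathrm{gr}\,J$ for $i<r$ and then descending induction on degree from the top), whereas for $\omega_{4}$ the non-constant corrections $\overline{x}_{1}x_{3}\partial_{3}$ and $\overline{x}_{1}x_{3}\partial_{2}$ mean that ``bracketing with $x_{k}$ yields a partial derivative'' is only true modulo strictly higher degree, so some such filtered-to-graded device is not optional. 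Likewise ``a straightforward case analysis shows $J$ exhausts $L$'' conceals the only delicate verification, namely which top monomials lie in the derived algebra; since that is exactly what separates the simple cases from $P(3,\overline{m},\omega_{1})$ with $m_{3}=1$, you should carry out at least the two decisive brackets explicitly rather than defer them.
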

\begin{proof}
	The algebras $P(3,\overline{m},\omega_{i})$, $i=1,2,3$ are considered in \cite[Section~4]{dep}.
	
	Let $J$ be a nonzero ideal in $P(3,\overline{m},\omega_{4})$ and $0\neq f\in J$. 	 	
	We have $ad\,x_{1}=\partial_{2}$, $ad\,x_{2}= \partial_{1} + \overline{x}_{1}x_{3}\partial_{3}$, $ad\,x_{3}=\partial_{3} + \overline{x}_{1}x_{3}\partial_{2}$. Commuting $f$ consecutively, if necessary, with $x_{1}$, $x_{3}$, $x_{2}$ we obtain $x_{1}\in J$. Now $\{x_{1},x_{2}x_{3}\}=x_{3}\in J$, $\{x_{3},x_{2}x_{3}\}=x_{2}\in J$. Let $L = \text{gr}\,P(3,\overline{m},\omega_{4}) = \bigoplus\limits_{i=-1}^{r} L_{i}$. Since $L_{-1}\subset \text{gr}\,J$, we have $L_{i}\subset \text{gr}\,J$ for $i<r$. Therefore, for any monomial $g \neq \overline{x}_{1}\overline{x}_{2}x_{3}$ there is $h$ such that $g +h\in J$, $\deg \lambda(h) > \deg g$ or $h =0$.	
	Since $\{x_{2},\overline{x}_{2}x_{3}\}=\overline{x}_{1}\overline{x}_{2}x_{3}\in J$, we have $J= P(3,\overline{m},\omega_{4})$, that is, the algebra is simple.
\end{proof}

Further we prove that the natural filtration of $P(3,\overline{m}, \omega_{i})$, $i=1,2,3,4$, $\overline{m} \neq (1,1,1)$ is intrinsically determined.
\begin{lemma}[\cite{N}]\label{lem:lemm1} 
	If $w_{1},\ldots,w_{k}\in P(3,\overline{m}, \omega)$ are linearly dependent, the same is true for $\{\lambda(w_{i})\}$.
\end{lemma}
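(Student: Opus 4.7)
The plan is to exploit the grading of $O(\mathscr{F})$ by total degree, observing that $\lambda$ is simply the initial-form operator for the decreasing filtration by degree. First I would dispose of the trivial case by assuming each $w_i\neq 0$, since otherwise $\lambda(w_i)$ is undefined; under this assumption every $\lambda(w_i)$ is a well-defined nonzero homogeneous element of $O(\mathscr{F})$. Starting from a nontrivial relation $\sum_{i=1}^{k}c_iw_i=0$, I would introduce $I=\{i\mid c_i\neq 0\}$ and $d=\min_{i\in I}\deg\lambda(w_i)$, the minimal degree actually appearing among those initial forms that participate in the relation.

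The main step is to project the vanishing sum $\sum_{i\in I}c_iw_i=0$ onto the degree-$d$ homogeneous component of $O(\mathscr{F})$. For $i\in I$ with $\deg\lambda(w_i)>d$ the element $w_i$ has no component in degree $d$, while for $i\in I$ with $\deg\lambda(w_i)=d$ the degree-$d$ part of $w_i$ is by definition $\lambda(w_i)$. The projection therefore produces
\[
\sum_{i\in I,\,\deg\lambda(w_i)=d}c_i\lambda(w_i)=0,
\]
and this is a nontrivial linear relation: the index set is nonempty by the choice of $d$, and on it every coefficient $c_i$ is nonzero. Extending by zero coefficients at the remaining indices yields a nontrivial linear dependence among the full tuple $\lambda(w_1),\ldots,\lambda(w_k)$, which is what is claimed.

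I do not anticipate a genuine obstacle; the argument is the standard ``examine the lowest-degree homogeneous summand of a vanishing sum'' trick and invokes none of the specific features of the Poisson bracket attached to $\omega$ or of the non-alternating structure. All that is used is the identification of $P(3,\overline{m},\omega)$ with a subspace of the graded vector space $\bigoplus_d O(\mathscr{F})_d$ on which $\lambda$ picks out the nonzero homogeneous summand of least degree.
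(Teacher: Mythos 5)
Your argument is correct and complete: the ``project onto the lowest-degree homogeneous component occurring among the terms with nonzero coefficient'' step is exactly what is needed, and you have handled the only delicate points (restricting to indices with $c_i\neq 0$, and noting that the resulting relation is supported on a nonempty set). The paper itself gives no proof of this lemma --- it is quoted from Lin Lei's paper \cite{N} --- so there is nothing to compare against; your proof is the standard one and would serve as a self-contained justification.
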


Let $L^{i} = P(3,\overline{m},\omega_{i})$, $i=1,2,3,4$ be a non-alternating Hamiltonian Lie algebra, where $\overline{m} \neq (1,1,1)$.
For $\phi\in Der(L^{i})$ let $R^{i}(\phi)=\dim(\textrm{Im}\phi)$ (see \cite{N}). Obviously, $R^{i}(\phi)=R^{i}(a\phi)$ for any $a\in K^{*}$. If $M$ is a subalgebra of $Der(L^{i})$, let $R^{i}(M)=\min\limits_{0\neq\phi\in M}R^{i}(\phi)$. 

Let $\{L_{(i)}\}$ be the standard filtration, $\{L_{i}\}$ be the standard grading and $\xi =ad\,\bar{x} = ad\,\bar{x}_{1}\bar{x}_{2}\bar{x}_{3} =ad\,x_{1}^{(2^{m_{1}}-1)}x_{2}^{(2^{m_{2}}-1)}x_{3}^{(2^{m_{3}}-1)}$.

\begin{lemma}\label{lem:lem5.5.1} 
	Let $0\neq\phi\in\langle\xi\rangle$. If $i=2,3$ or $i=1$, $m_{3}>1$, then $R^{i}(\phi)=4$. If $i=1$, $m_{3}=1$ or $i=4$, then $R^{i}(\phi)=3$.
\end{lemma}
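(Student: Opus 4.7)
My plan is as follows. Since $\phi\in\langle\xi\rangle$ and $\phi\neq 0$, we have $\phi=a\xi$ for some $a\in K^{*}$, so $R^{i}(\phi)=R^{i}(\xi)=\dim\mathrm{Im}(\xi|_{L^{i}})$. I will therefore compute this image in each case by evaluating $\xi=\mathrm{ad}\,\overline{x}$ on the monomial basis $\{x^{(\alpha)}\}$ of $L^{i}=P(3,\overline{m},\omega_{i})$ and collecting the nonzero outputs.

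The central multiplicative fact I will use is the following. Since $\partial_{k}\overline{x}$ has multidegree $(2^{m_{1}}-1,\ldots,2^{m_{k}}-2,\ldots,2^{m_{3}}-1)$, a direct calculation in the truncated divided power algebra shows that for a monomial $h=x^{(\beta)}$ the product $\partial_{k}\overline{x}\cdot h$ is nonzero if and only if $\beta_{j}=0$ for $j\neq k$ and $\beta_{k}\in\{0,1\}$; moreover $\partial_{k}\overline{x}\cdot 1=\partial_{k}\overline{x}$ and $\partial_{k}\overline{x}\cdot x_{k}=\overline{x}$ (because $\binom{2^{m_{k}}-1}{2^{m_{k}}-2}=2^{m_{k}}-1$ is odd). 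Feeding this into the four Poisson formulas for $\{\,\cdot\,,\,\cdot\,\}_{i}$, it follows that $\xi(x^{(\alpha)})$ can be nonzero only on the short list $g\in\{x_{1},x_{2},x_{3},x_{1}^{(2)},x_{2}^{(2)},x_{3}^{(2)},x_{1}x_{2}\}$ (subject to the truncation constraint $m_{j}\geq 2$ for $x_{j}^{(2)}$).

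Next I would tabulate $\xi(g)$ case by case on this short list. For $\omega_{3}$ one finds $\xi(x_{j})=\partial_{j}\overline{x}$ and $\xi(x_{j}^{(2)})=\overline{x}$ whenever $m_{j}\geq 2$; since $\overline{m}\neq(1,1,1)$ ensures some such $j$, the image is $\langle\partial_{1}\overline{x},\partial_{2}\overline{x},\partial_{3}\overline{x},\overline{x}\rangle$, of dimension~$4$. For $\omega_{2}$ one obtains $\xi(x_{1})=\partial_{1}\overline{x}+\partial_{2}\overline{x}$, $\xi(x_{2})=\partial_{1}\overline{x}$, $\xi(x_{3})=\partial_{3}\overline{x}$, with $\overline{x}$ produced by $\xi(x_{1}^{(2)})$ or $\xi(x_{3}^{(2)})$ (one of which is available, because $m_{1}<m_{2}$ and $m_{3}\notin[m_{1},m_{2}]$ force $m_{1}\geq 2$ or $m_{3}\geq 2$), again dimension~$4$. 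For $\omega_{1}$: $\xi(x_{1})=\partial_{2}\overline{x}$, $\xi(x_{2})=\partial_{1}\overline{x}$, $\xi(x_{3})=\partial_{3}\overline{x}$, whereas the two ostensible contributions to $\xi(x_{1}x_{2})$ cancel in characteristic~$2$; hence $\overline{x}$ can enter only via $\xi(x_{3}^{(2)})=\overline{x}$, giving dimension~$4$ when $m_{3}>1$ and dimension~$3$ when $m_{3}=1$. For $\omega_{4}$ (where $m_{3}=1$) the extra bracket term contributes $\overline{x}_{1}x_{3}\cdot\partial_{k}\overline{x}\cdot\partial_{j}g$ with $k\in\{2,3\}$, but $\overline{x}_{1}\cdot\partial_{k}\overline{x}$ always carries the factor $\overline{x}_{1}^{2}=0$, so this extra piece vanishes identically and $\xi$ acts exactly as it does for $\omega_{1}$ with $m_{3}=1$, yielding dimension~$3$.

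The main technical obstacle will be the bookkeeping of divided-power multiplications and binomial coefficients modulo~$2$: in particular verifying the characteristic-$2$ cancellation $\xi(x_{1}x_{2})=\overline{x}+\overline{x}=0$ in cases $\omega_{1}$ and $\omega_{2}$, and checking that the coefficient $\overline{x}_{1}x_{3}\cdot\partial_{k}\overline{x}$ appearing in the extra term of the bracket for $\omega_{4}$ vanishes identically via $\overline{x}_{1}^{2}=0$. Once these calculations are in place, the linear independence of $\partial_{1}\overline{x},\partial_{2}\overline{x},\partial_{3}\overline{x}$ together with $\overline{x}$ (when present) is immediate, since they are distinct monomials in the divided power basis.
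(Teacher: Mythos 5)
Your proposal is correct and arrives at the same final case analysis as the paper, but the key reduction is justified by a genuinely different (more computational) route. The paper argues structurally: since $\xi=\mathrm{ad}\,\bar{x}$ raises filtration degree to the top, one has $\phi(L_{(1)})=0$, the restriction of $\phi$ to $L_{(0)}$ maps into the one-dimensional module $L_{(r)}=\langle\bar{x}\rangle$, and $\phi([L_{(0)},L_{(0)}])=0$; hence $\mathrm{Im}\,\phi=\langle\phi(x_{1}),\phi(x_{2}),\phi(x_{3})\rangle+\phi(L_{(0)})$ with $\phi(L_{(0)})$ equal to $\langle\bar{x}\rangle$ or $0$, and it only remains to decide, case by case, whether some $\phi(x_{j}^{(2)})\neq0$. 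You instead derive the same containment by explicit divided-power arithmetic: the multidegree of $\partial_{k}\bar{x}$ forces $\partial_{k}\bar{x}\cdot x^{(\beta)}=0$ unless $x^{(\beta)}\in\{1,x_{k}\}$, which cuts the monomial basis down to a short list and lets you tabulate $\xi$ directly. The two approaches buy different things: yours is self-contained and makes explicit two points the paper leaves implicit, namely the characteristic-$2$ cancellation $\xi(x_{i}x_{j})=\bar{x}+\bar{x}=0$ (so that mixed degree-$2$ monomials never contribute $\bar{x}$) and the identical vanishing of the extra term of the bracket for $\omega_{4}$ via $\overline{x}_{1}^{2}=0$; the paper's filtration argument is shorter and does not depend on the particular shape of the Poisson bracket, so it transfers to other forms and more variables without redoing the multiplication table. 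The concluding case analysis --- which $x_{j}^{(2)}$ exists in $O(\mathscr{F})$ and whether it maps to $\bar{x}$ --- is the same in both, and your linear-independence claim is immediate since the images are distinct monomials.
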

\begin{proof}
	Obviously, $\phi(x_{1}),\phi(x_{2}), \phi(x_{3}) \in \textrm{Im}\phi$ are linearly independent. The action of $\phi$ on $L_{(0)}$ corresponds to the one-dimensional representation of $L_{(0)}$ on $L_{(r)} = \langle \bar{x}\rangle$. Since $\phi(L_{(1)})=0$ and $\phi([L_{(0)},L_{(0)}])=0$, it follows that Im($\phi$) $= \langle\phi(x_{1}), \phi(x_{2}), \phi(x_{3}), \bar{x}\rangle$ if there exists $x_{j}^{(2)}\in L$ such that $\phi(x_{j}^{(2)})\neq0$. Otherwise, Im($\phi$) $= \langle\phi(x_{1}), \phi(x_{2}), \phi(x_{3})\rangle$.
	
	Let $i=1$. Then $\phi(x_{1}^{(2)})= \phi(x_{2}^{(2)})=0$. If $m_{3}>1$, then $\phi(x_{3}^{(2)})\neq0$. Consequently, $R^{1}(\phi)=4$. If $m_{3}=1$, then $R^{1}(\phi)=3$.
	
	Let $i=2$. The condition $m_{1}< m_{2}$, $m_{3}\notin [m_{1}, m_{2}]$ yield $m_{1}>1$ or $m_{3}>1$. Since $\phi(x_{1}^{(2)})= \phi(x_{3}^{(2)})\neq0$, we have $R^{2}(\phi)=4$.
	
	Let $i=3$. Since $\overline{m} \neq (1,1,1)$, we have $m_{j}>1$ and $\phi(x_{j}^{(2)})\neq0$. Consequently, $R^{3}(\phi)=4$.
	
	Let $i=4$. Then $m_{3}=1$ and $\phi(x_{1}^{(2)})= \phi(x_{2}^{(2)})=0$. Consequently, $R^{4}(\phi)=3$.
\end{proof}

\begin{lemma}\label{lem:lem6.1} 
	Let $i=2,3$ or $i=1$, $m_{3}>1$. If $0\neq D\in L^{i}_{t}$, $-1\leqslant t< 2^{m_{1}}+ 2^{m_{2}}+ 2^{m_{3}} -5$, then there exist $5$ homogeneous elements $E_{1}, E_{2} ,E_{3}, E_{4}, E_{5}\in L^{i}$ such that $\{D,E_{1}\}, \{D,E_{2}\},\ldots, \{D,E_{5}\}$ are linearly independent.
\end{lemma}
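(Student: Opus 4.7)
The approach is to re-express the problem in terms of the Hamiltonian vector field attached to $D$ and then construct the five elements explicitly. By the formulas recorded at the beginning of Section~4, the action of $\mathrm{ad}\,D$ on $L^{i}\cong\mathfrak{m}/K$ coincides with the action of $D_{D}=\sum_{k}a_{k}\partial_{k}$ on $\mathfrak{m}$, where $a_{k}=\sum_{j}\overline{\omega}_{kj}\partial_{j}D\in O(\mathscr{F})$. Since $(\overline{\omega}_{kj})$ is invertible and $D\neq 0$, at least one coefficient $a_{k}$ is nonzero, so the lemma reduces to showing that the $K$-linear span of $\{D_{D}(E):E\in\mathfrak{m}\}$ has dimension at least five.

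First I would try the three coordinate functions $E_{k}=x_{k}$, $k=1,2,3$. The brackets $\{D,x_{k}\}_{i}$ are, via the matrix $(\overline{\omega}_{kj})$, a nondegenerate linear combination of the partials $\partial_{1}D,\partial_{2}D,\partial_{3}D$, so they span a subspace of dimension equal to $\dim\langle\partial_{1}D,\partial_{2}D,\partial_{3}D\rangle$. When this dimension equals three, the remaining two elements can be drawn from degree-$3$ test monomials such as $x_{k}x_{l}$, whose bracket $\{D,x_{k}x_{l}\}_{i}=x_{l}a_{k}+x_{k}a_{l}$ has lexicographic leading monomial of the form $x_{k}\cdot x^{(\alpha_{\ast}-e_{l})}$, where $x^{(\alpha_{\ast})}$ denotes the leading monomial of $D$. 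Because $|\alpha_{\ast}|=t+2<2^{m_{1}}+2^{m_{2}}+2^{m_{3}}-3$, at least one component of $\alpha_{\ast}$ is strictly less than $2^{m_{k}}-1$, leaving room in the divided power algebra to perform the required shifts so that the five leading monomials are pairwise distinct.

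The main obstacle is the degenerate case in which $D$ depends on fewer than three variables, so some of the partials $\partial_{k}D$ vanish and the linear test elements produce fewer than three independent brackets. For example, if $D=cx_{k}^{(p)}$ has only one nonzero partial, then the vector field $D_{D}$ has a single coefficient in cases $i=2,3$ or a $2\times 2$ block of coefficients in case $i=1$, and the image of $\mathrm{ad}\,D$ is spanned by products $x_{k}^{(p-1)}x^{(\gamma)}$ as $\gamma$ varies. I would show directly that the number of such nonzero products is at least five, using the height assumptions $m_{3}>1$ in case $i=1$ and $\overline{m}\neq(1,1,1)$ in case $i=3$, together with the bound on $t$, to guarantee enough nonzero multiplications in the divided power algebra. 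A case analysis on the number of variables appearing in $D$ and on the specific Poisson bracket formula for $\omega_{i}$ — the asymmetric term $\partial_{1}f\partial_{1}g$ of $\omega_{2}$ being absorbed automatically into the coefficients $a_{k}$ — completes the argument.
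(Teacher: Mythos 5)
Your reduction to showing $\dim\,\mathrm{Im}(\mathrm{ad}\,D)\geqslant 5$ and your first step (testing against $x_{1},x_{2},x_{3}$ to capture $\langle\partial_{1}D,\partial_{2}D,\partial_{3}D\rangle$) are sound and match the opening moves of the paper's argument. But the proposal stops exactly where the actual difficulty begins. In the divided power algebra over a field of characteristic $2$ one has $x_{k}^{(a)}x_{k}^{(b)}=\binom{a+b}{a}x_{k}^{(a+b)}$, which vanishes whenever the binary expansions of $a$ and $b$ overlap (e.g.\ $x_{k}\cdot x_{k}^{(a)}=0$ for every odd $a$), in addition to vanishing when $a+b\geqslant 2^{m_{k}}$. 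Consequently your key assertion --- that $\{D,x_{k}x_{l}\}_{i}$ has a nonzero ``leading monomial'' $x_{k}\cdot x^{(\alpha_{*}-e_{l})}$ and that one unit of slack in some exponent ``leaves room to perform the required shifts'' --- is simply not justified: that product is frequently zero, and this is precisely why the paper's proof must branch on the parity of $\gamma$, on whether exponents equal $2^{m_{k}}-1$ or $2^{m_{k}}-2$, on whether $(\alpha,\beta)=(2^{m_{1}}-1,2^{m_{2}}-1)$, and so on, choosing different quintuples $E_{1},\dots,E_{5}$ in each branch. Note also that $D$ is homogeneous here, so if all five $E_{j}$ were taken of the same degree the brackets would all be homogeneous of one degree and ``pairwise distinct leading monomials'' would not even be the relevant criterion; the paper deliberately mixes degrees of the $E_{j}$.

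The degenerate cases are likewise only gestured at. For $D$ depending on fewer variables (and, separately, for $t=-1$), verifying that at least five of the products $\partial_{k}D\cdot\partial_{k}E$ survive requires exactly the same binomial-coefficient bookkeeping, and it is here that the hypotheses $m_{3}>1$ (for $i=1$), $m_{1}<m_{2}$, $m_{3}\notin[m_{1},m_{2}]$ (for $i=2$) and $\overline{m}\neq(1,1,1)$ (for $i=3$) must actually be invoked --- your text names them but never uses them. In short, the plan identifies the right framework but defers the entire combinatorial content of the lemma (which in the paper occupies the full case-by-case construction) to phrases like ``a case analysis completes the argument''; as written it is not a proof.
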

\begin{proof}
	\underline{The case of} $L^{1}$, $m_{3}>1$.
	
	$(1)~ t=-1$. Let $D=a_{1}x_{1} + a_{2}x_{2} + a_{3}x_{3}$ and $a_{3}\neq0$. Put $E_{1}=x_{3}^{(2)}$, $E_{2}=x_{3}^{(3)}$, $E_{3}=x_{1}x_{3}^{(3)}$, $E_{4}=x_{2}x_{3}^{(3)}$, $E_{5}=x_{1}x_{2}x_{3}^{(3)}$. 	
	Let $D=a_{1}x_{1} + a_{2}x_{2}$ and $a_{i}\neq0$. Put $E_{1}=x_{1}x_{2}x_{3}$, $E_{2}=x_{1}x_{2}x_{3}^{(3)}$, $E_{3}=x_{j}x_{3}$, $E_{4}=x_{j}x_{3}^{(2)}$, $E_{5}=x_{j}x_{3}^{(3)}$, where $j\neq i$.
	
	$(2)~ t>-1$. Let $D= ax_{1}^{(\alpha)}x_{2}^{(\beta)}x_{3}^{(\gamma)} + f(x_{1},x_{2},x_{3})$, where $a\neq0$, $\alpha,\beta>0$ and $f$ contains $x_{3}^{(s)}g(x_{1},x_{2})$ if $s\leqslant \gamma$. Put $E_{1}= x_{1}^{(2^{m_{1}}-\alpha)}$, $E_{2}= x_{3}^{(2^{m_{3}}+1-\gamma)}$ ($\gamma>1$) or $x_{1}x_{3}^{(2)}$.
	If $\gamma< 2^{m_{3}}-2$, then $E_{3} = x_{2}^{(2^{m_{2}}-\beta)}$, $E_{4}= x_{1}^{(2^{m_{1}}-\alpha)}x_{3}^{(2^{m_{3}}-1-\gamma)}$, $E_{5}= x_{1}^{(2^{m_{1}}-\alpha)}x_{3}^{(2^{m_{3}}-2-\gamma)}$ ($\gamma$ is even) or $x_{3}^{(2)}$ ($\gamma$ is odd). If $\gamma= 2^{m_{3}}-2$, then $E_{3} = x_{2}$, $E_{4} = x_{1}^{(2^{m_{1}}-\alpha)}x_{3}$, $E_{5} = x_{2}^{(2^{m_{2}}-\beta)}x_{3}$. If $\gamma= 2^{m_{3}}-1$, then $E_{3} = x_{2}^{(2^{m_{2}}-\beta)}$, $E_{4} = x_{1}^{(2^{m_{1}}-1-\alpha)}x_{2}^{(2^{m_{2}}-1-\beta)}x_{3}$, $E_{5} = x_{1}^{(2^{m_{1}}-1-\alpha)}x_{2}^{(2^{m_{2}}-1-\beta)}x_{3}^{(2)}$.
	
	Let $D= ax_{3}^{(t+2)}+ f(x_{1},x_{3})+ g(x_{2},x_{3})$ and $a\neq0$. Put $\{E_{1},\ldots, E_{5}\} = \left\lbrace x_{3}, \bar{x}_{1}x_{3}, \bar{x}_{2}x_{3}, x_{1}x_{2}x_{3},\right. \\ \left.x_{1}x_{2}x_{3}^{(2^{m_3}-1-t)}\tiny\right\rbrace$.
	
	Let $D= ax_{i}^{(\alpha)}x_{3}^{(\gamma)} + f(x_{1},x_{3})+ g(x_{2},x_{3})$, where $a\neq0$, $\alpha>0$ and $f,g$ contain $x_{3}^{(s)}x_{q}^{(h)}$ if $s\leqslant \gamma$, $h>0$. If $\gamma>0$, then  $E_{1} = x_{j}$, $E_{2}= x_{i}^{(2^{m_{i}}-1-\alpha)}x_{3}$, $E_{3}= x_{i}^{(2^{m_{i}}-\alpha)}x_{j}$, $E_{4}= x_{i}^{(2^{m_{i}}-\alpha)}x_{j}x_{3}^{(2^{m_{3}}-1-\gamma)}$ ($\gamma<2^{m_{3}}-1$) or $x_{i}^{(2^{m_{i}}-1-\alpha)}x_{j}^{(2^{m_{j}}-2)}x_{3}$, $E_{5}= x_{j}x_{3}^{(2)}$ ($\gamma$ is odd) or $x_{j}x_{3}^{(2^{m_{3}}-2-\gamma)}$ ($\gamma<2^{m_{3}}-2$ is even) or $x_{j}x_{3}$, where $\{i, j\} = \{1, 2\}$. If $\gamma=0$, then $\{E_{1},\ldots, E_{5}\} = \{x_{j}, x_{i}^{(2^{m_{i}}-\alpha)}x_{j}, x_{i}^{(2^{m_{i}}-\alpha)}x_{j}x_{3}, x_{i}^{(2^{m_{i}}-\alpha)}x_{j}x_{3}^{(2)}, x_{i}^{(2^{m_{i}}-\alpha)}x_{j}x_{3}^{(3)}\}$.
	
	\underline{The case of} $L^{2}$, $m_{3}>1$.
	
	$(1)~ t=-1$. Let $D=a_{1}x_{1} + a_{2}x_{2} + a_{3}x_{3}$ and $a_{3}\neq0$. The same $L^{1}$. 	
	Let $D=a_{1}(x_{1}+x_{2}) + a_{2}x_{2}$ and $a_{i}\neq0$. Put $\{E_{1},\ldots, E_{5}\}$ the same $L^{1}$.
	
	$(2)~ t>-1$. Let $D= ax_{1}^{(\alpha)}x_{2}^{(\beta)}x_{3}^{(\gamma)} + f(x_{1},x_{2},x_{3})$, where $a\neq0$, $\alpha,\beta>0$ and $f$ contains $x_{3}^{(s)}g(x_{1},x_{2})$ if $s\leqslant \gamma$. Put $E_{1}= x_{2}^{(2^{m_{2}}-\beta)}$, $E_{2}= x_{3}^{(2^{m_{3}}+1-\gamma)}$ ($\gamma>1$) or $x_{3}^{(3)}$ ($\gamma=1$) or $x_{1}^{(2^{m_{1}}-\alpha)}x_{3}^{(2)}$ ($\gamma=0\neq 2^{m_{3}}-4$, $(\alpha,\beta)\neq (2^{m_{1}}-1,2^{m_{2}}-1)$) or $x_{1}x_{3}^{(2)}+ x_{2}x_{3}^{(2)}$.
	If $\gamma< 2^{m_{3}}-2$, then $E_{3} = x_{1}^{(2^{m_{1}}-\alpha)}$ ($(\alpha,\beta)\neq (2^{m_{1}}-1,2^{m_{2}}-1)$) or $x_{1}+x_{2}$, $E_{4}= x_{2}^{(2^{m_{2}}-\beta)}x_{3}^{(2^{m_{3}}-1-\gamma)}$, $E_{5}= x_{2}^{(2^{m_{2}}-\beta)}x_{3}^{(2^{m_{3}}-2-\gamma)}$ ($\gamma$ is even) or $x_{3}^{(2)}$ ($\gamma$ is odd). If $\gamma= 2^{m_{3}}-2$, then $E_{3} = x_{1}+ x_{2}$, $E_{4} = x_{2}^{(2^{m_{2}}-\beta)}x_{3}$, $E_{5}  = x_{1}^{(2^{m_{1}}-\alpha)}x_{3}$ ($(\alpha,\beta)\neq (2^{m_{1}}-1,2^{m_{2}}-1)$) or $x_{1}x_{3}+x_{2}x_{3}$. If $\gamma= 2^{m_{3}}-1$, then $E_{3} = x_{1}^{(2^{m_{1}}-\alpha)}$, $E_{4} = x_{1}^{(2^{m_{1}}-1-\alpha)}x_{2}^{(2^{m_{2}}-1-\beta)}x_{3}$, $E_{5} = x_{1}^{(2^{m_{1}}-1-\alpha)}x_{2}^{(2^{m_{2}}-1-\beta)}x_{3}^{(2)}$.
	
	Let $D= ax_{3}^{(t+2)}+ f(x_{1},x_{3})+ g(x_{2},x_{3})$ and $a\neq0$. The same $L^{1}$.
	
	Let $D= ax_{i}^{(\alpha)}x_{3}^{(\gamma)} + f(x_{1},x_{3})+ g(x_{2},x_{3})$, where $a\neq0$, $\alpha>0$ and $f,g$ contain $x_{3}^{(s)}x_{q}^{(h)}$ if $s\leqslant \gamma$, $h>0$. The same $L^{1}$.
	
	\underline{The case of} $L^{2}$, $m_{3}=1$, $m_{1}>1$, $m_{2}>m_{1}>1$.
	
	$(1)~ t=-1$. Let $D=a_{1}(x_{1}+ x_{2}) + a_{2}x_{2} + a_{3}x_{3}$ and $a_{i}\neq0$, $i=1,2$. Put $E_{1}=x_{j}^{(2)}$, $E_{2}=x_{j}^{(3)}$, $E_{3}=x_{j}^{(3)}x_{3}$, $E_{4}=x_{i}x_{j}^{(3)}$, $E_{5}=x_{i}x_{j}^{(3)}x_{3}$, where $j\neq i$. 	
	Let $D=a_{3}x_{3}$ and $a_{3}\neq0$. Put $E_{1}=x_{1}x_{3}$, $E_{2}=x_{2}x_{3}$, $E_{3}=x_{1}^{(2)}x_{3}$, $E_{4}=x_{2}^{(2)}x_{3}$, $E_{5}=x_{1}x_{2}x_{3}$.
	
	$(2)~ t>-1$. Let $D= ax_{1}^{(\alpha)}x_{2}^{(\beta)}x_{3} + f(x_{1},x_{2},x_{3})$, where $a\neq0$, $\beta>0$ and $f$ contains $x_{1}^{(s)}g(x_{2},x_{3})$ if $s\leqslant \alpha$. Put $E_{1}= x_{2}^{(2^{m_{2}}-1-\beta)}x_{3}$.
	If $\gamma< 2^{m_{3}}-1$, then $E_{2} = x_{1}^{(2^{m_{1}}-1-\alpha)}x_{2}^{(2^{m_{2}}-1-\beta)}x_{3}$, $E_{3}= x_{1}^{(2^{m_{2}}+1-\alpha)}$ ($\alpha>1$) or $x_{1}^{(2)}x_{3}$. If $\gamma< 2^{m_{3}}-2$, then $E_{4}= x_{3}$ ($\beta\neq 2^{m_{2}}-1$) or $x_{1}x_{2}$ ($\beta= 2^{m_{2}}-1$, $\alpha$ is even) or $x_{1}^{(2^{m_{1}}-1-\alpha)}x_{2}$ ($\beta= 2^{m_{2}}-1$, $\alpha$ is odd), $E_{5} = x_{1}^{(2^{m_{1}}-2-\alpha)}x_{2}^{(2^{m_{2}}-1-\beta)}x_{3}$ ($\alpha$ is even) or $x_{1}^{(2)}$ ($\alpha$ is odd). If $\alpha= 2^{m_{3}}-2$, then $E_{4} = x_{1}+ x_{2}$, $E_{5}  = x_{1}x_{3}$ ($\beta\neq 2^{m_{2}}-1$) or $x_{1}x_{2}$. If $\gamma= 2^{m_{3}}-1$, then $E_{2} = x_{2}$, $E_{3} = x_{3}$, $E_{4} = x_{1}^{(2)}$ ($\beta\neq 2^{m_{2}}-2$) or $x_{1}x_{2}$, $E_{5} = x_{1}^{(2)}x_{2}^{(2^{m_{2}}-1-\beta)}$.
	
	Let $D= ax_{1}^{(t+2)}+ f(x_{1},x_{2})+ g(x_{1},x_{3})$ and $a\neq0$. Put $\{E_{1},\ldots, E_{5}\} = \left\lbrace x_{1}, x_{1}\bar{x}_{2}, x_{1}x_{3}, x_{1}x_{2}x_{3},\right. \\ \left.x_{1}^{(2^{m_1}-1-t)}x_{2}x_{3}\right\rbrace$.
	
	Let $D= ax_{1}^{(\alpha)}x_{3} + f(x_{1},x_{2})+ g(x_{1},x_{3})$, where $a\neq0$, $\alpha>0$ and $f,g$ contain $x_{1}^{(s)}x_{q}^{(h)}$ if $s\leqslant \alpha$, $h>0$. Put $\{E_{1},\ldots, E_{5}\} = \{x_{3}, x_{2}, x_{2}^{(2)}, x_{2}^{(3)}, \bar{x}_{2}x_{3}\}$.
	
	Let $D= ax_{1}^{(\alpha)}x_{2}^{(\beta)} + f(x_{1},x_{2})+ g(x_{1},x_{3})$, where $a\neq0$, $\beta>0$ and $f,g$ contain $x_{1}^{(s)}x_{q}^{(h)}$ if $s\leqslant \alpha$, $h>0$. If $\alpha>0$, then $E_{1}= x_{1}^{(2^{m_{1}}-\alpha)}$ ($(\alpha,\beta)\neq (2^{m_{1}}-1,2^{m_{2}}-1)$) or $x_{1}+x_{2}$, $E_{2}= x_{2}^{(2^{m_{2}}-\beta)}$, $E_{3}= x_{2}^{(2^{m_{2}}-\beta)}x_{3}$, $E_{4}= x_{1}^{(2^{m_{1}}+1-\alpha)}$ ($\alpha>1$) or $x_{1}^{(2)}x_{2}^{(2^{m_{2}}-1-\beta)}$, $E_{5}= x_{2}^{(2)}x_{3}$ ($\alpha$ is odd) or $x_{1}^{(2^{m_{1}}-1-\alpha)}x_{3}$ ($\alpha$ is even, $(\alpha,\beta)\neq (2^{m_{1}}-2,2^{m_{2}}-1)$) or $x_{1}x_{3}+ x_{2}x_{3}$.
	If $\alpha=0$, then $\{E_{1},\ldots, E_{5}\} = \{x_{1}, x_{1}x_{3}, x_{1}^{(2)}, x_{1}^{(3)}, x_{1}x_{2}^{(2^{m_{2}}-\beta)}\}$.
	
	\underline{The case of} $L^{3}$. Without the loss of generality we can assume that $m_{3}>1$.
	
	$(1)~ t=-1$. The same $L^{1}$ with $i=j$.
	
	$(2)~ t>-1$. Let $D= ax_{1}^{(\alpha)}x_{2}^{(\beta)}x_{3}^{(\gamma)} + f(x_{1},x_{2},x_{3})$, where $a\neq0$, $\alpha,\beta>0$ and $f$ contains $x_{3}^{(s)}g(x_{1},x_{2})$ if $s\leqslant \gamma$. Put $E_{1}= x_{1}x_{2}^{(2^{m_{2}}-1-\beta)}$, $E_{2}= x_{3}^{(2^{m_{3}}+1-\gamma)}$ ($\gamma>1$) or $x_{2}x_{3}^{(2)}$.
	If $\gamma< 2^{m_{3}}-2$, then $E_{3} = x_{1}^{(2^{m_{1}}-1-\alpha)}x_{2}$ ($(\alpha,\beta)\neq (2^{m_{1}}-2,2^{m_{2}}-2)$) or $x_{1}x_{3}^{(2^{m_{3}}-1-\gamma)}$, $E_{4}= x_{1}x_{2}^{(2^{m_{2}}-1-\beta)}x_{3}^{(2^{m_{3}}-1-\gamma)}$, $E_{5}= x_{1}x_{2}^{(2^{m_{2}}-1-\beta)}x_{3}^{(2^{m_{3}}-2-\gamma)}$ ($\gamma$ is even) or $x_{3}^{(2)}$ ($\gamma$ is odd). If $\gamma= 2^{m_{3}}-2$, then $E_{3} = x_{2}$, $E_{4} = x_{1}x_{3}$, $E_{5} = x_{1}x_{2}^{(2^{m_{2}}-1-\beta)}x_{3}$. Let $\gamma= 2^{m_{3}}-1$. Since $\alpha,\beta>0$ and $(\alpha,\beta)\neq (2^{m_{1}}-1,2^{m_{2}}-1)$, we have $m_{1}>1$ or $m_{2}>1$. Let $m_{1}>1$, then $E_{3} = x_{1}^{(2^{m_{1}}-1-\alpha)}x_{2}$ ($(\alpha,\beta)\neq (2^{m_{1}}-2,2^{m_{2}}-2)$) or $x_{1}x_{3}^{(2)}$, $E_{4} = x_{1}^{(2^{m_{1}}-1-\alpha)}x_{2}^{(2^{m_{2}}-1-\beta)}x_{3}^{(2)}$, $E_{5} = x_{1}x_{3}$ ($\alpha$ is even) or $x_{1}^{(2)}x_{3}$ ($\alpha$ is odd).
	
	Let $D= ax_{3}^{(t+2)}+ f(x_{1},x_{3})+ g(x_{2},x_{3})$ and $a\neq0$. The same $L^{1}$.
	
	Let $D= ax_{i}^{(\alpha)}x_{3}^{(\gamma)} + f(x_{1},x_{3})+ g(x_{2},x_{3})$, where $a\neq0$, $\alpha>0$ and $f,g$ contain $x_{3}^{(s)}x_{q}^{(h)}$ if $s\leqslant \gamma$, $h>0$. If $\gamma>0$, then  $E_{1} = x_{i}$, $E_{2}= x_{i}^{(2^{m_{i}}-1-\alpha)}x_{3}$, $E_{3}= x_{1}x_{2}$, $E_{4}= x_{1}x_{2}x_{3}^{(2^{m_{3}}-1-\gamma)}$ ($\gamma<2^{m_{3}}-1$) or $x_{i}^{(2^{m_{i}}-1-\alpha)}x_{j}^{(2^{m_{j}}-2)}x_{3}$, $E_{5}= x_{j}x_{3}^{(2)}$ ($\gamma$ is odd) or $x_{i}x_{3}^{(2^{m_{3}}-2-\gamma)}$ ($\gamma<2^{m_{3}}-2$ is even) or $x_{i}x_{3}$ ($\gamma=2^{m_{3}}-2$, $\alpha\neq2^{m_{i}}-2$) or $x_{i}^{(3)}$, where $\{i, j\} = \{1, 2\}$. If $\gamma=0$, then $\{E_{1},\ldots, E_{5}\} = \{x_{i}, x_{i}^{(2^{m_{i}}+1-\alpha)}, x_{i}^{(2^{m_{i}}+1-\alpha)}x_{3}, x_{i}^{(2^{m_{i}}+1-\alpha)}x_{3}^{(2)}, x_{i}^{(2^{m_{i}}+1-\alpha)}x_{3}^{(3)}\}$.
\end{proof}

\begin{lemma} 
	Let $f\in L^{4}_{(t)}\diagdown L^{4}_{(t+1)}$, $g\in L^{4}_{(s)}\diagdown L^{4}_{(s+1)}$ and $\{\lambda(f),g\}\neq 0$.
	
	(1) If $g=x_{1}^{(\alpha)}x_{2}^{(\beta)}x_{3}^{(\gamma)}$ and $\alpha>0$, then $\lambda(\{f,g\}) = \{\lambda(f),g\}$. 
	
	(2) If $g=x_{2}^{(\beta)}x_{3}^{(\gamma)}$ and $\{\lambda(f), g\}\in L^{4}_{(t+s)}\diagdown L^{4}_{(t+s+1)}$, then $\lambda(\{f,g\}) = \{\lambda(f),g\}$. 
\end{lemma}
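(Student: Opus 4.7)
The plan is to decompose $f=\lambda(f)+f_{1}$ with $f_{1}\in L^{4}_{(t+1)}$, to split the bracket $\{\cdot,\cdot\}_{4}$ into its principal part and the correction
\[
\{h,g\}_{4}=\{h,g\}_{1}+C(h,g),\qquad C(h,g)=\overline{x}_{1}x_{3}\bigl(\partial_{2}h\,\partial_{3}g+\partial_{3}h\,\partial_{2}g\bigr),
\]
and then to compare the polynomial degrees of the four resulting summands in
\[
\{f,g\}_{4}=\{\lambda(f),g\}_{1}+\{f_{1},g\}_{1}+C(\lambda(f),g)+C(f_{1},g).
\]
Since $\lambda(f)$ has polynomial degree $t+2$ and $g$ has polynomial degree $s+2$, the principal term $\{\lambda(f),g\}_{1}$ is homogeneous of degree $t+s+2$, hence lies in $L^{4}_{t+s}$; and $\{f_{1},g\}_{1}\in L^{4}_{(t+s+1)}$ by construction. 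Because $m_{3}=1$, the factor $\overline{x}_{1}x_{3}$ in the correction has polynomial degree $2^{m_{1}}\geqslant 2$, so each nonzero value of $C(h,g)$ has degree $\deg h+\deg g-2+2^{m_{1}}$, i.e.\ lies strictly higher than $\{h,g\}_{1}$.

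\textbf{Part (1).} When $g=x_{1}^{(\alpha)}x_{2}^{(\beta)}x_{3}^{(\gamma)}$ with $\alpha\geqslant 1$, both $\partial_{2}g$ and $\partial_{3}g$ (if nonzero) retain the factor $x_{1}^{(\alpha)}$. Multiplying by $\overline{x}_{1}=x_{1}^{(2^{m_{1}}-1)}$ gives
\[
x_{1}^{(2^{m_{1}}-1)}\cdot x_{1}^{(\alpha)}=\binom{2^{m_{1}}-1+\alpha}{\alpha}x_{1}^{(2^{m_{1}}-1+\alpha)}=0\quad\text{in }O(\mathscr{F}),
\]
because $2^{m_{1}}-1+\alpha\geqslant 2^{m_{1}}$ exceeds the height of $x_{1}$. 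Hence $C(h,g)=0$ for every $h\in L^{4}$. The bracket reduces to its principal part, so $\{\lambda(f),g\}_{4}=\{\lambda(f),g\}_{1}$ is homogeneous of degree $t+s+2$, and $\{f_{1},g\}_{1}$ lives in strictly higher degrees. Therefore $\lambda(\{f,g\}_{4})=\{\lambda(f),g\}_{1}=\{\lambda(f),g\}_{4}$.

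\textbf{Part (2).} Now $g=x_{2}^{(\beta)}x_{3}^{(\gamma)}$ has no $x_{1}$-factor, so $C$ need not vanish, but it contributes only in strictly higher degrees: $C(\lambda(f),g)\in L^{4}_{t+s+2^{m_{1}}}\subseteq L^{4}_{(t+s+1)}$, and similarly for $C(f_{1},g)$ and for $\{f_{1},g\}_{1}$. The hypothesis $\{\lambda(f),g\}\in L^{4}_{(t+s)}\diagdown L^{4}_{(t+s+1)}$ is equivalent to $\{\lambda(f),g\}_{1}\neq 0$, and identifies $\{\lambda(f),g\}_{1}$ as the lowest homogeneous component of $\{\lambda(f),g\}_{4}$. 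All remaining summands of $\{f,g\}_{4}$ lie in $L^{4}_{(t+s+1)}$, whence $\lambda(\{f,g\}_{4})=\{\lambda(f),g\}_{1}=\lambda(\{\lambda(f),g\}_{4})$, which is the claimed equality.

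\textbf{Main obstacle.} There is no serious difficulty; the whole proof is a careful degree accounting combined with the divided-power identity $\overline{x}_{1}\cdot x_{1}^{(\alpha)}=0$ for $\alpha\geqslant 1$. The only point worth emphasizing is the distinction between the two cases: in (1) this identity makes the correction $C$ vanish outright, whereas in (2) one must instead rely on the inequality $2^{m_{1}}\geqslant 2$ to push the correction into strictly higher filtration level, and the stated nonvanishing hypothesis is precisely what pins down the leading term.
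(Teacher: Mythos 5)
Your proof is correct and follows essentially the same route as the paper's: in (1) the correction term vanishes because $\overline{x}_{1}\cdot x_{1}^{(\alpha)}=0$ for $\alpha>0$, and in (2) the decomposition $f=\lambda(f)+f_{1}$ plus degree counting pushes every summand except $\{\lambda(f),g\}_{1}$ into $L^{4}_{(t+s+1)}$. The only caveat --- shared verbatim with the paper's own proof, so not a deviation on your part --- is that the argument literally yields $\lambda(\{f,g\})=\lambda(\{\lambda(f),g\})$, which coincides with the stated right-hand side $\{\lambda(f),g\}$ only when the correction $\overline{x}_{1}x_{3}\bigl(\partial_{2}\lambda(f)\,\partial_{3}g+\partial_{3}\lambda(f)\,\partial_{2}g\bigr)$ vanishes, and that need not happen under the stated hypotheses (e.g. $\lambda(f)=x_{2}^{(2)}+x_{1}x_{3}$, $g=x_{2}^{(2)}x_{3}$, $\overline{m}=(2,2,1)$).
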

\begin{proof}
	Recall, that $\{f,g\}= \partial_{1}f\partial_{2}g +\partial_{2}f\partial_{1}g + \partial_{3}f\partial_{3}g + \overline{x}_{1}x_{3}(\partial_{2}f\partial_{3}g +\partial_{3}f\partial_{2}g)$. 
	
	(1) Obviously, if $g=x_{1}^{(\alpha)}x_{2}^{(\beta)}x_{3}^{(\gamma)}$ and $\alpha>0$, then $\{f,g\}= \partial_{1}f\partial_{2}g +\partial_{2}f\partial_{1}g + \partial_{3}f\partial_{3}g$. 
	
	(2) Let $f= \lambda(f) + f_{1}$, where $f_{1}\in L^{4}_{(t+1)}$. Then $\{f,g\}\in L^{4}_{(t+s)}$, $\{f_{1},g\} \in L^{4}_{(t+s+1)}$. Since $\{\lambda(f), g\}\in L^{4}_{(t+s)}\diagdown L^{4}_{(t+s+1)}$, $\lambda(\{f,g\}) = \lambda(\{\lambda(f),g\}+ \{f_{1},g\})= \lambda(\{\lambda(f),g\}) = \{\lambda(f),g\}$.
\end{proof}

\begin{lemma}\label{lem:lem6.2} 
	Let $i=4$ or $i=1$, $m_{3}=1$. If $0\neq D\in L^{i}_{(t)}\diagdown L^{i}_{(t+1)}$, $-1\leqslant t< 2^{m_{1}} + 2^{m_{2}}-3$, then there exist $4$ homogeneous elements $E_{1}, E_{2} ,E_{3}, E_{4}\in L^{i}$ such that $\{D,E_{1}\}, \ldots, \{D,E_{4}\}$ are linearly independent.
\end{lemma}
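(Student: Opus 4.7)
The plan is to follow the template of the proof of Lemma \ref{lem:lem6.1} while producing four independent images instead of five. The upper bound $t<2^{m_{1}}+2^{m_{2}}-3$ keeps us strictly below the top-degree element $\overline{x}=\overline{x}_{1}\overline{x}_{2}x_{3}$, for which Lemma \ref{lem:lem5.5.1} gives only $R^{i}(\xi)=3$ in the present two cases; excluding this top degree is precisely what allows four independent brackets to exist at all.

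First I would reduce to a statement about leading terms. By the preceding lemma, if each $E_{j}$ is chosen so that either $E_{j}=x_{1}^{(\alpha)}x_{2}^{(\beta)}x_{3}^{(\gamma)}$ with $\alpha>0$, or $E_{j}=x_{2}^{(\beta)}x_{3}^{(\gamma)}$ with $\{\lambda(D),E_{j}\}\in L^{i}_{(t+s)}\diagdown L^{i}_{(t+s+1)}$, then $\lambda(\{D,E_{j}\})=\{\lambda(D),E_{j}\}$. Hence, by Lemma \ref{lem:lemm1}, linear independence of $\{D,E_{1}\},\ldots,\{D,E_{4}\}$ will follow from linear independence of the leading brackets $\{\lambda(D),E_{1}\},\ldots,\{\lambda(D),E_{4}\}$, and it suffices to treat a homogeneous $D$ of degree $t$.

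The core of the argument is a case analysis keyed to the structure of $\lambda(D)$, entirely analogous to the subdivisions used in Lemma \ref{lem:lem6.1}: the case $t=-1$, where $D=a_{1}x_{1}+a_{2}x_{2}+a_{3}x_{3}$ and one selects four low-degree monomials adapted to which $a_{i}$ are nonzero; the case $\lambda(D)=a x_{1}^{(\alpha)}x_{2}^{(\beta)}x_{3}^{(\gamma)}$ with $\alpha,\beta>0$; the case $\lambda(D)=a x_{i}^{(\alpha)}x_{3}^{(\gamma)}$ with a single variable from $\{x_{1},x_{2}\}$; and the degenerate case where $\lambda(D)$ involves only $x_{3}$, which is severely limited since $m_{3}=1$ forces $x_{3}^{(k)}=0$ for $k\geqslant 2$. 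In each subcase I expect to carry over the first four of the five choices of $E_{j}$ supplied by the proof of Lemma \ref{lem:lem6.1}, simply dropping any $E_{j}$ that involves $x_{3}^{(k)}$ with $k\geqslant 2$ and substituting, where necessary, a monomial of appropriate polynomial degree in $x_{1},x_{2}$ multiplied by $x_{3}$ or by $1$.

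The main technical obstacle will be the algebra $L^{4}$, whose Poisson bracket carries the extra term $\overline{x}_{1}x_{3}(\partial_{2}f\partial_{3}g+\partial_{3}f\partial_{2}g)$. This term can raise polynomial degree via the factor $\overline{x}_{1}$ or distort leading behaviour in less obvious ways, so one must verify the filtration hypothesis of part (2) of the preceding lemma before concluding $\lambda(\{D,E_{j}\})=\{\lambda(D),E_{j}\}$. I would therefore prefer choices of $E_{j}$ that fall under part (1) of that lemma, i.e.\ monomials containing $x_{1}$, so that the identification is automatic, and resort to $x_{2}$-$x_{3}$ monomials only when the filtration bookkeeping is transparent. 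Once the four leading brackets are verified to be linearly independent by explicit computation, Lemma \ref{lem:lemm1} completes the proof.
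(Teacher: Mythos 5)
Your high-level strategy is the same as the paper's: reduce to the leading term $\lambda(D)$ via the preceding lemma on $\lambda(\{f,g\})=\{\lambda(f),g\}$, then exhibit four explicit homogeneous witnesses and invoke Lemma~\ref{lem:lemm1}, preferring witnesses containing $x_{1}$ so that part~(1) of that lemma applies automatically in $L^{4}$. All of that is correctly identified, as is the role of the bound $t<2^{m_{1}}+2^{m_{2}}-3$.

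However, there is a genuine gap: the entire substance of this lemma is the explicit construction of $E_{1},\ldots,E_{4}$ in each subcase, and you never carry it out. Worse, the one concrete mechanism you propose --- ``carry over the first four of the five choices of $E_{j}$ from Lemma~\ref{lem:lem6.1}, dropping any that involve $x_{3}^{(k)}$ with $k\geqslant 2$'' --- does not work. When $m_{3}=1$, essentially all of the witnesses in Lemma~\ref{lem:lem6.1} vanish identically: e.g.\ for $t=-1$, $a_{3}\neq 0$ the list there is $x_{3}^{(2)}, x_{3}^{(3)}, x_{1}x_{3}^{(3)}, x_{2}x_{3}^{(3)}, x_{1}x_{2}x_{3}^{(3)}$, every one of which is zero in $O(3,(m_{1},m_{2},1))$, so after ``dropping'' nothing survives and the substitutes are the whole proof. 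Moreover the subcase decomposition of Lemma~\ref{lem:lem6.1} is keyed on the position of $\gamma$ relative to $2^{m_{3}}-1$ and $2^{m_{3}}-2$, which degenerates when $\gamma\in\{0,1\}$; the paper instead has to reorganize the case analysis around the shape of $\lambda(D)$ with $\gamma\leqslant 1$ (whether $x_{3}$ occurs, whether $\alpha=0$ or $\alpha=2^{m_{1}}-1$, the parity of $\alpha$, etc.) and additionally split $L^{4}$ into the cases $m_{1}>1$ and $m_{1}=1<m_{2}$, producing entirely new lists of witnesses for each. For $L^{4}$ the extra term $\overline{x}_{1}x_{3}(\partial_{2}f\partial_{3}g+\partial_{3}f\partial_{2}g)$ also changes the values of the brackets themselves, so even a surviving witness from Lemma~\ref{lem:lem6.1} would need its independence re-verified from scratch. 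Your proposal is a sound plan of attack, but as written it defers, rather than supplies, the proof.
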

\begin{proof}
	We will prove that there exist $4$ homogeneous elements $E_{1}, E_{2} ,E_{3}, E_{4}\in L^{i}$ such that $\{\lambda(D),E_{1}\}, \ldots, \{\lambda(D),E_{4}\}$ are linearly independent. We will search $E_{i}$ for $L^{4}$ which satisfy Lemma~6. Thus $\{\lambda(D), E_{i}\}=\lambda(\{D,E_{i}\})$ and $\{D,E_{i}\}$ are also linearly independent by Lemma~\ref{lem:lemm1}.
	
	\underline{The case of} $L^{4}$, $m_{1}>1$.
	
	$(1)~ t=-1$. Let $\lambda(D) =a_{1}x_{1} + a_{2}x_{2} + a_{3}x_{3}$ and $a_{3}\neq0$. Put $E_{1}=x_{1}x_{3}$, $E_{2}=x_{1}x_{2}x_{3}$, $E_{3}=x_{1}^{(2)}x_{3}$, $E_{4}=x_{1}^{(3)}x_{3}$.
	Let $\lambda(D)=a_{1}x_{1} + a_{2}x_{2}$ and $a_{i}\neq0$. Put $E_{1}=x_{1}x_{2}$, $E_{2}=x_{1}x_{2}x_{3}$, $E_{3}=x_{1}^{(2)}x_{2}$, $E_{4}=x_{1}^{(3)}x_{2}$.
	
	$(2)~ t>-1$.  Let $\lambda(D)= ax_{1}^{(\alpha)}x_{2}^{(\beta)} + f(x_{1},x_{2},x_{3})$, where $a\neq0$, $\beta>0$ and $f$ contains $x_{1}^{(s)}x_{2}^{(h)}$ if $s< \alpha$. Put $\{E_{1},\ldots, E_{4}\} = \{x_{1}, x_{1}x_{3}, x_{1}^{(2^{m_{1}}-1-\alpha)}x_{2}^{(2^{m_{2}}-\beta)}, x_{1}^{(2^{m_{1}}-1-\alpha)}x_{2}^{(2^{m_{2}}-\beta)}x_{3}\}$.
	
	Let $\lambda(D)= ax_{1}^{(\alpha)}x_{3} + f(x_{1},x_{2},x_{3})$, where $a\neq0$, $\alpha>0$ and $f$ contains $x_{1}^{(s)}x_{2}^{(h)}$ if $h=0$. Put $E_{1} = x_{3}$. If $\alpha$ is even, then $\{E_{2}, E_{3}, E_{4}\} = \{x_{1}x_{3}, \bar{x}_{2}x_{3}, x_{1}x_{2}x_{3}\}$. If $\alpha$ is odd, then $E_{2} = x_{2}x_{3}$, $E_{3} = x_{1}^{(2^{m_{1}}-1-\alpha)}x_{2}$, $E_{4} = x_{1}^{(2^{m_{1}}-1-\alpha)}x_{2}x_{3}$ ($\alpha\neq2^{m_{1}}-1$) or $x_{1}x_{2}$.
	
	Let $\lambda(D)= ax_{1}^{(\alpha)}x_{2}^{(\beta)}x_{3} + f(x_{1},x_{2})x_{3} + bx_{1}^{(t+2)}$, where $a\neq0$, $\beta>0$ and $f$ contains $x_{1}^{(s)}x_{2}^{(h)}$ if $s<\alpha$, $h>0$. Put $E_{1}= x_{1}$. If $\alpha>0$, then $E_{2}= x_{2}^{(2^{m_{2}}-\beta)}$, $E_{3}= x_{3}$, $E_{4}= x_{1}^{(2^{m_{1}}-\alpha)}$ ($\alpha\neq2^{m_{1}}-1$) or $x_{2}^{(2^{m_{2}}-\beta)}x_{3}$. If $\alpha=0$, then $\{E_{2}, E_{3}, E_{4}\} = \{x_{1}x_{2}^{(2^{m_{2}}-\beta)}, x_{1}x_{3}, \bar{x}_{1}\}$.
	
	Let $\lambda(D)= ax_{1}^{(t+2)}$, $a\neq0$. Put $\{E_{1},\ldots, E_{4}\} = \{x_{2}, x_{2}x_{3}, x_{1}^{(2^{m_{1}}-2-t)}x_{2}, x_{1}^{(2^{m_{1}}-2-t)}x_{2}x_{3}\}$.
	
	\underline{The case of} $L^{4}$, $m_{1}=1$, $m_{2}>1$.
	
	$(1)~ t=-1$. Let $\lambda(D) =a_{1}x_{1} + a_{2}x_{2} + a_{3}x_{3}$ and $a_{3}\neq0$. Put $E_{1}=x_{1}x_{3}$, $E_{2}=x_{1}x_{2}x_{3}$, $E_{3}=x_{1}x_{2}^{(2)}x_{3}$, $E_{4}=x_{1}x_{2}^{(3)}x_{3}$.
	Let $\lambda(D)=a_{1}x_{1} + a_{2}x_{2}$ and $a_{i}\neq0$. Put $E_{1}=x_{1}x_{2}$, $E_{2}=x_{1}x_{2}x_{3}$, $E_{3}=x_{1}x_{2}^{(2)}$, $E_{4}=x_{1}x_{2}^{(3)}$.
	
	$(2)~ t>-1$. Let $\lambda(D)= ax_{2}^{(t+2)} + f(x_{1},x_{2},x_{3})$ and $a\neq0$. Put $\{E_{1},\ldots, E_{4}\} = \{x_{1}, x_{1}x_{3}, x_{1}x_{2}^{(2^{m_{2}}-2-t)}, x_{1}x_{2}^{(2^{m_{2}}-2-t)}x_{3}\}$.
	
	Let $\lambda(D)= ax_{2}^{(\alpha)}x_{3} + f(x_{1},x_{2},x_{3})$, where $a\neq0$, $\alpha>0$ and $f$ does not contain $x_{2}^{(t+2)}$. Put $\{E_{1}, E_{2}, E_{3}\} = \{x_{1}, x_{1}x_{3}, x_{1}x_{2}^{(2^{m_{2}}-\alpha)}\}$ and $E_{4}= x_{1}x_{2}^{(2^{m_{2}}-1-\alpha)}x_{3}$ ($\alpha\neq2^{m_{2}}-1$) or $x_{3}$.
	
	Let $\lambda(D)=  ax_{1}x_{2}^{(t+1)} +bx_{1}x_{2}^{(t)}x_{3}$ and $a_{1}\neq0$. Put $\{E_{1},\ldots, E_{4}\} = \{x_{1}, x_{2}, x_{1}x_{3}, x_{2}x_{3}\}$.
	
	Let $\lambda(D)= ax_{1}x_{2}^{(t)}x_{3}$, $a\neq0$. Put $E_{1}=x_{2}$, $E_{2}=x_{1}x_{2}^{(2^{m_{2}}-1-t)}$. If $t>0$, then $E_{3} =x_{1}$, $E_{4}=x_{2}^{(2^{m}-t)}$. If $t=0$, then $E_{3} =x_{1}x_{2}$, $E_{4}=\bar{x}_{2}$.
	
	\underline{The case of} $L^{3}$, $m_{3}=1$. The same $L^{4}$.
\end{proof}

\begin{theorem}\label{key777}
	If $i=2,3$ or $i=1$, $m_{3}>1$, then $R^{i}(L^{i})=4$ and 	
	$R^{i}(D)= 4$ if and only if $0\neq D\in\langle \xi\rangle$. If $i=1$, $m_{3}=1$ or $i=4$, then $R^{i}(L^{i})=3$ and 	
	$R^{i}(D)= 3$ if and only if $0\neq D\in\langle \xi\rangle$.
\end{theorem}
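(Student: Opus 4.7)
The strategy is to combine the bounds on $R^i$ computed in Lemma~\ref{lem:lem5.5.1}, Lemma~\ref{lem:lem6.1}, Lemma~\ref{lem:lem6.2}, and Lemma~\ref{lem:lemm1}. The upper bound and the ``if'' direction come for free: Lemma~\ref{lem:lem5.5.1} gives $R^i(\xi) = 4$ in the first case and $R^i(\xi) = 3$ in the second, and since rescaling preserves the rank of a derivation, the same holds for any nonzero element of $\langle \xi \rangle$. What remains is the ``only if'' direction: given $0 \neq D \in L^i$ with $\mathrm{ad}\,D \notin \langle \xi \rangle$, one must show $R^i(\mathrm{ad}\,D) > 4$ (respectively $> 3$).

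I would proceed by writing $D = \lambda(D) + D_1$ with $\lambda(D)$ homogeneous of some degree $t$ and $D_1$ in the next filtration layer $L^i_{(t+1)}$. Let $r$ denote the top weight of the natural filtration on $L^i$, so $r = 2^{m_1} + 2^{m_2} + 2^{m_3} - 5$ when Lemma~\ref{lem:lem6.1} is the relevant bound and $r = 2^{m_1} + 2^{m_2} - 3$ in the setting of Lemma~\ref{lem:lem6.2}. The top layer is one-dimensional, spanned by $\bar{x} = \bar{x}_1 \bar{x}_2 \bar{x}_3$; if $t = r$ then $D_1 = 0$ and $D$ is a scalar multiple of $\bar{x}$, so $\mathrm{ad}\,D \in \langle \xi \rangle$ against hypothesis. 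Hence $-1 \leq t < r$, which is precisely the range covered by Lemma~\ref{lem:lem6.1} or Lemma~\ref{lem:lem6.2}.

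Applied to the homogeneous element $\lambda(D)$, the relevant lemma furnishes homogeneous witnesses $E_1, \ldots, E_5$ (respectively $E_1, \ldots, E_4$) with $\{\lambda(D), E_j\}$ linearly independent. For $i \in \{1, 2, 3\}$ the brackets $\{\,\cdot\,,\,\cdot\,\}_i$ are homogeneous of degree $-2$, so $\lambda(\{D, E_j\}) = \{\lambda(D), E_j\}$ automatically. For $i = 4$, each $E_j$ produced in the proof of Lemma~\ref{lem:lem6.2} is of one of the two shapes permitted by the preceding auxiliary lemma on $\lambda(\{f, g\})$, which therefore applies to give the same identity. Lemma~\ref{lem:lemm1} in its contrapositive form then upgrades the linear independence of the leading parts to that of the full brackets $\{D, E_j\}$, producing $5$ (resp.\ $4$) linearly independent elements of $\mathrm{Im}(\mathrm{ad}\,D)$ and hence $R^i(\mathrm{ad}\,D) \geq 5$ (resp.\ $\geq 4$).

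The only delicate point is propagating the leading-term/Poisson-bracket compatibility through the inhomogeneous case $i = 4$, and this has already been absorbed into Lemma~\ref{lem:lem6.2} by tailoring the $E_j$ to the hypotheses of the auxiliary lemma; the rest of the argument is a clean assembly of the top-degree reduction, the appropriate $5$-witness (resp.\ $4$-witness) lemma, and Lemma~\ref{lem:lemm1}.
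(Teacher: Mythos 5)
Your proposal is correct and follows essentially the same route as the paper's proof: the ``if'' direction and the value $R^{i}(\xi)$ from Lemma~\ref{lem:lem5.5.1}, and the ``only if'' direction by passing to $\lambda(D)$, invoking Lemma~\ref{lem:lem6.1} (resp.\ Lemma~\ref{lem:lem6.2}, whose proof already builds in the compatibility $\{\lambda(D),E_j\}=\lambda(\{D,E_j\})$ via the auxiliary lemma for $i=4$), and upgrading linear independence of the leading parts to that of $\{D,E_j\}$ by Lemma~\ref{lem:lemm1}. The only difference is cosmetic: you spell out the top-degree reduction and the degree-homogeneity of the brackets for $i=1,2,3$, which the paper leaves implicit.
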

\begin{proof}
	(1) Let $i=2,3$ or $i=1$, $m_{3}>1$. It follows from Lemma~\ref{lem:lem5.5.1} that if $0\neq D\in\langle\xi\rangle$, then $R^{i}(D)=4$. We shall prove that if $f\in L^{i}$ and $f\notin\langle \bar{x}\rangle$, then $R^{i}(ad\,f)>4$. Clearly, $\lambda(f)\notin\langle \bar{x}\rangle$, thus, according to Lemma~\ref{lem:lem6.1}, there are $5$ homogeneous elements $E_{1},\ldots,E_{5}\in L^{i}$ such that $\{\lambda(f) , E_{j}\}$, $1\leqslant j\leqslant 5$ are linearly independent. But $\{\lambda(f), E_{j}\}=\lambda(\{f,E_{j}\})$. Hence, $\{f,E_{j}\}$ are also linearly independent by Lemma~\ref{lem:lemm1}. Therefore, $R^{i}(ad\,f)\geqslant 5>4$.
	
	(2) Let $i=1$, $m_{3}=1$ or $i=4$. It follows from Lemma~\ref{lem:lem5.5.1} that if $0\neq D\in\langle\xi\rangle$, then $R^{i}(D)=3$. If $f\in L^{i}$ and $f\notin\langle \bar{x}\rangle$, then according to Lemma~\ref{lem:lem6.2}, there are $4$ homogeneous elements $E_{1},\ldots,E_{4}\in L^{i}$ such that $\{f, E_{j}\}$, $1\leqslant j\leqslant 4$ are linearly independent. Therefore, $R^{i}(ad\,f)\geqslant 4>3$.
\end{proof}

\begin{corollary}
	Let $L = P(3,\overline{m}, \omega_{i})$, $i=1,2,3,4$, $\overline{m}\neq(1,1,1)$. The standard maximal subalgebra $\mathscr{L}_{0}$ of $L$ is the normalizer of $\langle \bar{x} \rangle$ in $L$. Thus, $\mathscr{L}_{0}$ is an invariant subalgebra and the natural filtration of $L$ is intrinsically determined.
\end{corollary}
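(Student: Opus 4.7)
The plan is to deduce the corollary from Theorem~\ref{key777} by characterizing $\langle \bar{x}\rangle$ intrinsically and then identifying $\mathscr{L}_0$ with its normalizer in $L$. I would first check that the adjoint representation of $L$ is faithful: if $\{f,g\}=0$ for every $g\in L$, then since the inverse matrix $(\overline{\omega}_{ij})$ is invertible, $\partial_i f=0$ for each $i$, and hence $f\in K\cap \mathfrak{m}=\{0\}$. Combined with Theorem~\ref{key777}, this shows that $\langle \bar{x}\rangle$ is the unique one-dimensional subspace of $L$ whose image under $\operatorname{ad}$ realizes the minimal rank $R^i(L^i)$. Consequently both $\langle \bar{x}\rangle$ and its Lie-theoretic normalizer $N_L(\langle \bar{x}\rangle)$ are preserved by every automorphism of $L$, which will give the invariance assertion once the second identification is established.

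The next step is to prove $N_L(\langle \bar{x}\rangle)=\mathscr{L}_0$. Set $r=2^{m_1}+2^{m_2}+2^{m_3}-5$. In the standard filtration the top piece $L_{(r)}$ is spanned by the single divided-power monomial $\bar{x}$, and the bound $\{L_{(0)},L_{(r)}\}\subseteq L_{(r)}=\langle \bar{x}\rangle$ gives $\mathscr{L}_0=L_{(0)}\subseteq N_L(\langle \bar{x}\rangle)$. For the reverse inclusion I would write an arbitrary $f\in L$ as $f=a_1x_1+a_2x_2+a_3x_3+f_0$ with $f_0\in\mathfrak{m}^{(2)}=\mathscr{L}_0$ and compute each $\{x_j,\bar{x}\}_i$ explicitly. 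In every bracket type the result is (up to a nonzero scalar) a partial derivative of $\bar{x}$, i.e.\ a divided-power monomial distinct from $\bar{x}$; the three monomials $\{x_1,\bar{x}\}_i,\{x_2,\bar{x}\}_i,\{x_3,\bar{x}\}_i$ are therefore linearly independent and none of them lies in $\langle \bar{x}\rangle$. The hypothesis $\{f,\bar{x}\}\in\langle \bar{x}\rangle$ then forces $a_1=a_2=a_3=0$, so $f\in \mathscr{L}_0$.

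The only subtle point is the bracket $\{-,-\}_4$, which carries the extra summand $\overline{x}_1 x_3(\partial_2 f\,\partial_3 g+\partial_3 f\,\partial_2 g)$. The key observation is that the contribution of this summand to each $\{x_j,\bar{x}\}_4$ always produces the factor $\overline{x}_1\cdot\overline{x}_1=(x_1^{(2^{m_1}-1)})^2$, which vanishes in $O(3,\overline{m})$ because $x_1^{(k)}=0$ as soon as $k\geqslant 2^{m_1}$. Once this cancellation is noted, the verification in case $i=4$ is uniform with the other three. Finally, with $\mathscr{L}_0$ identified as the automorphism-invariant subalgebra $N_L(\langle \bar{x}\rangle)$, the terms of the natural filtration are recovered inductively by the standard recipe $L_{(k+1)}=\{f\in L_{(k)}\mid [f,\mathscr{L}_0]\subseteq L_{(k)}\}$, so the whole natural filtration is intrinsically determined.
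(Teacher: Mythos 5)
Your argument is essentially the one the paper intends: the corollary is stated without proof precisely because Theorem~\ref{key777} identifies $\langle\bar{x}\rangle$ as the unique line on which $R^{i}$ attains its minimum, whence $\langle\bar{x}\rangle$ and its normalizer are automorphism-invariant, and the identification $N_{L}(\langle\bar{x}\rangle)=\mathscr{L}_{0}$ together with the vanishing of the extra term in $\{\,\cdot\,,\bar{x}\}_{4}$ (via $\bar{x}_{1}\bar{x}_{1}=0$) goes through exactly as you describe. One slip to correct: the recursion recovering the filtration should read $L_{(k+1)}=\{f\in L_{(k)}\mid [f,L]\subseteq L_{(k)}\}$ rather than $[f,\mathscr{L}_{0}]\subseteq L_{(k)}$ --- the latter condition is satisfied by all of $L_{(k)}$ and would recover nothing.
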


\begin{theorem}\label{key999}
	Let $L^{i} = P(3,\overline{m}, \omega_{i})$, $L^{j} = P(3,\overline{m}', \omega_{j})$, $i,j=1,2,3,4$. If $i\neq j$, then the non-alternating Hamiltonian Lie algebras $L^{i}$ and $L^{j}$ are not isomorphic.
\end{theorem}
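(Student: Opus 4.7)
The plan is to convert the hypothetical Lie algebra isomorphism $\psi\colon L^{i}\to L^{j}$ into an admissible isomorphism $\varphi$ of function algebras with $\varphi(\omega_{i})=a\omega_{j}$, apply Theorem~\ref{thth 5.2}(ii), and then invoke Theorem~\ref{thth om} to derive a contradiction. The whole argument is a reduction of non-isomorphism of the Lie algebras to non-equivalence of the forms.

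In the main case where $\overline{m},\overline{m}'\neq(1,1,1)$, I would first appeal to the preceding Corollary to argue that $\psi$ automatically preserves the standard maximal subalgebra. The key point is that Theorem~\ref{key777} intrinsically characterises the line $\langle\bar x\rangle$ as the locus of nonzero elements where the rank invariant $R(\operatorname{ad}\cdot)$ attains its minimum, and the Corollary then identifies $\mathscr{L}_{0}$ with the normalizer of this line. Hence $\psi$ is an isomorphism of the pairs $(L^{i},\mathscr{L}_{0}^{i})\to(L^{j},\mathscr{L}_{0}^{j})$. Since $n=\dim E=3>2$, Theorem~\ref{thth 5.2}(ii) then produces an admissible isomorphism $\varphi\colon O(\mathscr{F})\to O(\mathscr{F}')$ with $\varphi(\omega_{i})=a\omega_{j}$ for some $a\in K\setminus\{0\}$. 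By Remark~2 following Theorem~\ref{thth om}, a diagonal scaling $x_{k}\mapsto c_{k}x_{k}$ realises the equivalence $a\omega_{j}\sim\omega_{j}$, so composing with $\varphi$ gives $\omega_{i}\sim\omega_{j}$, contradicting Theorem~\ref{thth om}.

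The edge case in which one of $\overline m$, $\overline m'$ equals $(1,1,1)$ lies outside the scope of the Corollary and must be handled separately by simpler structural invariants. Only $i\in\{3,4\}$ occur on $(1,1,1)$, and Theorem~\ref{simp} tells us that $P(3,\overline 1,\omega_{4})$ is simple while $P^{(1)}(3,\overline 1,\omega_{3})$ (and hence $L^{3}$ itself) is not; this rules out an isomorphism between the two possibilities on the same heights. Cross-comparisons with $L^{j}$ on heights $\overline m'\neq(1,1,1)$ are excluded by a dimension count from Theorem~\ref{simp}: any such $L^{j}$ has dimension at least $2^{4}-2=14$, whereas $\dim L\leqslant 7$ on $(1,1,1)$. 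Similarly, when the pair $(i,j)$ lies among those that share the rank value $R=3$ by Theorem~\ref{key777} (namely $L^{1}$ with $m_{3}=1$ versus $L^{4}$), simplicity again separates them since $P(3,\overline m,\omega_{1})$ is not simple for $m_{3}=1$ while $P(3,\overline m',\omega_{4})$ is.

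The principal technical obstacle is the intrinsic characterization of $\mathscr{L}_{0}$ via the rank invariant $R$ — this is the substance of Theorem~\ref{key777} and the Corollary, which are already available. Beyond that, the argument amounts to citing Theorem~\ref{thth 5.2}(ii) and Theorem~\ref{thth om}; the only delicate bookkeeping concerns the edge case $\overline m=(1,1,1)$, where one must be careful to invoke simplicity and dimension rather than the intrinsic-filtration machinery.
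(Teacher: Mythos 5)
Your proposal is correct and follows essentially the same route as the paper: use the rank invariant $R$ (Theorem~\ref{key777} and its Corollary) to show the isomorphism preserves the standard maximal subalgebra, then apply Theorem~\ref{thth 5.2}(ii) together with Remark~2 of Section~4 to deduce equivalence of the forms, contradicting Theorem~\ref{thth om}, with the $(1,1,1)$ case settled by simplicity of $L^{4}$ versus non-simplicity of $L^{3}$. Your explicit dimension count ruling out cross-comparisons between $(1,1,1)$ and other height vectors is a minor elaboration of a step the paper only asserts.
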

\begin{proof}
	Suppose that $L^{i}$ and $L^{j}$ are isomorphic, $\varphi\colon L^{i}\rightarrow L^{j}$ is an isomorphism. If $\overline{m}=(1,1,1)$, then $\overline{m}'=(1,1,1)$ either.
	
	Consider the case when $\overline{m}\neq(1,1,1)$. The algebras $L^{i}$ and $L^{j}$ have the same invariant $R$, $R^{i}= R^{j}$. Let $\bar{x}(i)$ is the monomial in $L^{i}$ of maximal degree. It follows from Theorem~\ref{key777} that $\varphi(\bar{x}(i))= a\bar{x}(j)$, $a\in K^{\ast}$, and, consequently, $\varphi(L_{(0)}^{i}) = L_{(0)}^{j}$, i.e. $\varphi$ is an isomorphism of transitive Lie algebras. From Theorem~\ref{thth 5.2} and Remark~2 of Section~4 we obtain that the forms $\omega_i$, $\omega_j$ are equivalent. By Theorem~\ref{thth om}~ $i=j$.
	
	Suppose that $\overline{m}=(1,1,1)$. In this case we have only $L^{3} = P(3, (1,1,1), \omega_{3})$ which is not simple and $L^{4} = P(3, (1,1,1), \omega_{4})$ which is simple.
\end{proof}

\begin{corollary}
	Let $\mathscr{L}^{i}$ be non-alternating Hamiltonian Lie algebras in three variables, $P^{(1)}(\mathscr{F}_{i}, \omega_{i})\subseteq \mathscr{L}^{i} \subseteq \widetilde{P}(\mathscr{F}_{i}, \omega_{i})$, $i=1,2,3,4$. If $i\neq j$, then algebras $\mathscr{L}^{i}$, $\mathscr{L}^{j}$ are not isomorphic.
\end{corollary}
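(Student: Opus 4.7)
The plan is to reduce to Theorem~\ref{key999} by extracting a canonical simple ideal from each $\mathscr{L}^{i}$. Given $\psi\colon\mathscr{L}^{i}\to\mathscr{L}^{j}$, comparison of dimensions forces the flags to have the same heights (up to permutation), so I may assume $\overline{m}_{i}=\overline{m}_{j}=:\overline{m}$. If $\overline{m}=(1,1,1)$, only the forms $\omega_{3}$ and $\omega_{4}$ can appear, and the simplicity distinction used at the end of Theorem~\ref{key999} separates $\mathscr{L}^{3}$ from $\mathscr{L}^{4}$.

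Assume henceforth $\overline{m}\neq(1,1,1)$. By Theorem~\ref{simp}, $P^{(1)}(\mathscr{F}_{i},\omega_{i})$ is a simple ideal of $\mathscr{L}^{i}$. I would show that this ideal is characteristic by identifying it with the terminal member of the derived series. First, a direct Poisson bracket computation via~(\ref{eqeq 0.7}) shows that $\{x_{k}^{(2^{m_{k}})},x_{l}^{(2^{m_{l}})}\}\in O(\mathscr{F})$ for each of the four forms, whence $\widetilde{P}/P$ is abelian. Combined with $\dim P/P^{(1)}\leq 1$ (Theorem~\ref{simp}), this gives that $\widetilde{P}/P^{(1)}$ is solvable. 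Since $P^{(1)}$ is perfect and $P^{(1)}\subseteq\mathscr{L}^{i}\subseteq\widetilde{P}$, the derived series of $\mathscr{L}^{i}$ stabilizes precisely at $P^{(1)}(\mathscr{F}_{i},\omega_{i})$. Therefore $\psi$ restricts to an isomorphism of simple Lie algebras $P^{(1)}(\mathscr{F}_{i},\omega_{i})\to P^{(1)}(\mathscr{F}_{j},\omega_{j})$.

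Next I would run the argument of Theorem~\ref{key999} on this restriction. The rank-invariant machinery of Theorem~\ref{key777} and its corollary extend from $P$ to $P^{(1)}$: the element $\bar{x}$ (or its canonical substitute inside $P^{(1)}$) is still detected as the unique line on which $R$ attains its minimum, so $\psi$ preserves the standard maximal subalgebras. Theorem~\ref{thth 5.2}(ii) then produces an admissible isomorphism $\varphi\colon O(\mathscr{F}_{i})\to O(\mathscr{F}_{j})$ with $\varphi(\omega_{i})=a\omega_{j}$, $a\in K^{\ast}$. By Remark~2 following Theorem~\ref{thth om}, $a\omega_{j}$ is equivalent to $\omega_{j}$, hence $\omega_{i}\sim\omega_{j}$, and Theorem~\ref{thth om} forces $i=j$.

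The main obstacle is the middle step: verifying that $\widetilde{P}/P^{(1)}$ is genuinely solvable reduces to a concrete bracket calculation for the extra divided-power generators $D_{x_{k}^{(2^{m_{k}})}}$; and in the case $i=1$, $m_{3}=1$, where $\bar{x}\notin P^{(1)}$, one must adapt the rank-invariant argument of Theorem~\ref{key777} so that the standard maximal subalgebra of $P^{(1)}$ is still recovered intrinsically without reference to $\bar{x}$ itself.
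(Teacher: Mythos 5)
Your overall strategy coincides with the paper's: pass to a characteristic derived subalgebra and reduce to the pairwise non-isomorphism theorem for the canonical algebras. The paper, however, does this in one line: it observes that an isomorphism $\mathscr{L}^{i}\to\mathscr{L}^{j}$ induces an isomorphism $[\mathscr{L}^{i},\mathscr{L}^{i}]\to[\mathscr{L}^{j},\mathscr{L}^{j}]$ and invokes Theorem~\ref{key999} directly (implicitly using that $[\widetilde{P},\widetilde{P}]\subseteq P$ and $[P^{(1)},P^{(1)}]=P^{(1)}$, so the first derived subalgebra already lands among the algebras covered by that theorem). You instead descend to the stable term $P^{(1)}(\mathscr{F}_{i},\omega_{i})$ of the derived series and then propose to re-run the entire machinery of Theorems~\ref{key777} and~\ref{key999} on $P^{(1)}$ rather than on $P$. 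That is where your write-up stops being a proof: Theorem~\ref{key999} is stated for $P(3,\overline{m},\omega_{i})$, and in the one case where $P\neq P^{(1)}$ and the algebras are not separated by simplicity alone ($i=1$, $m_{3}=1$), the top monomial $\bar{x}=\overline{x}_{1}\overline{x}_{2}x_{3}$ spans the complement of $P^{(1)}$ in $P$, so the rank invariant $R$, whose whole point is to detect the line $\langle\xi\rangle=\langle\operatorname{ad}\bar{x}\rangle$, is not available on $P^{(1)}$ as stated. You name this as ``the main obstacle'' and leave it unresolved, so as written the argument is incomplete precisely there; either you must carry out that adaptation, or (simpler, and closer to the paper) stay at the level of $[\mathscr{L},\mathscr{L}]$, note it contains $P^{(1)}$ and is contained in $P$, and argue that Theorem~\ref{key999} (or its proof) still separates these algebras.

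Two smaller points. First, the claim that equality of dimensions forces the heights to agree \emph{up to permutation} is false (e.g.\ $(1,2,3)$ and $(2,2,2)$ give the same total $\sum m_{k}$ and hence the same dimension range); fortunately you only need that $\overline{m}=(1,1,1)$ iff $\overline{m}'=(1,1,1)$, which the dimension count does give, and the flag comparison is in any case absorbed by Theorems~\ref{thth 5.2} and~\ref{thth om}. Second, your verification that $\{x_{k}^{(2^{m_{k}})},x_{l}^{(2^{m_{l}})}\}\in O(\mathscr{F})$, hence $\mathscr{L}^{(2)}=P^{(1)}$, is correct and is a useful explicit justification of a step the paper leaves tacit.
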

\begin{proof}	
	If algebras $\mathscr{L}^{i}$, $\mathscr{L}^{j}$, $i\neq j$ are isomorphic, then the algebras $[\mathscr{L}^{i}, \mathscr{L}^{i}]$, $[\mathscr{L}^{j}, \mathscr{L}^{j}]$ are isomorphic as well which results in a contradiction with Theorem~\ref{key999}.
\end{proof}

\end{document}